\newtheorem{thm}{Theorem}[section] \newtheorem{pro}[thm]{Proposition}
\newtheorem{lemma}[thm]{Lemma}
\newtheorem{cor}[thm]{Corollary}
\numberwithin{equation}{section}
\theoremstyle{remark}
\theoremstyle{definition} 
\newtheorem{rmk}[thm]{Remark} \newtheorem*{clm}{Claim}
\DeclareMathAlphabet{\mathpzc}{OT1}{pzc}{m}{it}
\DeclareMathOperator*{\spec}{Spec} 
 \DeclareMathOperator*{\QF}{QF}
\DeclareMathOperator*{\Gal}{Gal}
\newcommand{\ZZ}{\mathbb{Z}} \newcommand{\Aff}{\mathbb{A}}
\newcommand{\PP}{\mathbb{P}} \newcommand{\FF}{\mathbb{F}}
\newcommand{\cO}{\mathcal{O}} 
\newcommand{\scrD}{\mathcal{D}}
\begin{document}
\title{Killing Wild ramification} 
 \author{
   Manish Kumar
  } \thanks{The author is supported by SFB/TR-45 grant}
  \address{
Department of Mathematics \\
Universit\"at Duisburg-Essen \\
45117 Essen, Germany
  }
  \email{manish.kumar@uni-due.de}
 \begin{abstract}
  We compute the inertia group of the compositum of wildly ramified Galois covers. It is used to show that even the $p$-part of the inertia group of a Galois cover of $\PP^1$ branched only at infinity can be reduced if there is a jump in the ramification filtration at two (in the lower numbering) and certain linear disjointness statement holds.
 \end{abstract}
\maketitle

\section{Introduction}
 
Let $k$ be a field of characteristic $p$. Let $\phi:X\to Y$ be a finite Galois $G$-cover of regular irreducible $k$-curves branched at $\tau\in Y$. Let $I$ be the inertia subgroup of $G$ at a point of $X$ above $\tau$. It is well known, $I=P\rtimes \mu_n$ where $P$ is a $p$-group, $\mu_n$ is a cyclic group of order $n$ and $(n,p)=1$. 
Abhyankar's lemma can be viewed as a tool to modify the tame part of the inertia group. 
For instance, suppose $k$ contains $n^{th}$-roots of unity. Let $y$ be a regular local parameter of $Y$ at $\tau$. Let $Z\to Y$ be the Kummer cover of regular curves given by the field extension $k(Y)[y^{1/n}]/k(Y)$ and $\tau'\in Z$ be the unique point lying above $\tau$. Then the pullback of the cover $X\to Y$ to $Z$ is a Galois cover of $Z$ branched at $\tau'$. But the inertia group at any point above $\tau'$ is $P$. A wild analogue of this phenomenon appears as Theorem \ref{killing-inertia}. 

Assume $k$ is also algebraically closed field and let $X\to \PP^1$ be a Galois $G$-cover of $k$-curves branched only at $\infty$. Let $I$ be the inertia subgroup at some point above $\infty$ and $P$ be the sylow-$p$ subgroup of $I$. Then noting that the tame fundamental group of $\Aff^1$ is trivial, it can be seen that the conjugates of $P$ in $G$ generate the whole of $G$. Abhyankar's inertia conjecture states that the converse should also be true. More precisely, any subgroup of a quasi-$p$ group $G$ of the form $P\rtimes \mu_n$ where $P$ is a $p$-group and $(n,p)=1$ such that conjugates of $P$ generate $G$ is the inertia group of a $G$-cover of $\PP^1$ branched only at $\infty$. 

An immediate consequence of a result of Harbater (\cite[Theorem 2]{Harbater-adding-branch-points}) shows that the inertia conjecture is true for every sylow-$p$ subgroup of $G$. In fact Harbater's result shows that if a $p$-subgroup $P$ of $G$ occurs as the inertia group of a $G$-cover of $\PP^1$ branched only at $\infty$ and $Q$ is a $p$-subgroup of $G$ containing $P$ then there exists a $G$-cover of $\PP^1$ branched only at $\infty$ so that the inertia group is $Q$. Proposition \ref{killing.wild} and a study of wild ramification filtration (Proposition \ref{characterize.G_2}) enables us to show that in certain cases the given $G$-cover of $\PP^1$ can be modified to obtain a $G$-cover of $\PP^1$ branched only at $\infty$ so that the inertia group of this new cover is smaller than the inertia group $P$ of the original cover (Theorem \ref{reducing-inertia}).

So far the inertia conjecture is only known for some explicit groups. See for instance \cite[Theorem 5]{Bouw-Pries} and \cite[Theorem 1.1]{Muskat-Pries}.

\section{Filtration on ramification group}

For a complete discrete valuation ring (DVR) $R$, $v_R$ will denote the valuation associated to $R$ with the value group $\ZZ$. Let $S/R$ be a finite extension of complete DVRs such that $\QF(S)/\QF(R)$ is a Galois extension with Galois group $G$. Let us define a decreasing filtration on $G$ by 
$$G_i=\{\sigma\in G: v_S(\sigma x -x)\ge i+1, \, \forall x\in S\}$$
Note that $G_{-1}=G$ and $G_0$ is the inertia subgroup. This filtration is called the ramification filtration. For every $i$, $G_i$ is a normal subgroup of $G$. The following are some well-known results.

\begin{pro}\cite[IV, 1, Proposition 2 and 3]{Serre-local.fields}\label{filtration}
 Let $S/R$ be a finite extension of complete DVRs such that $\Gal(\QF(S)/\QF(R))=G$. Let $H$ be a subgroup $G$. Let $K$ be the fixed subfield of $\QF(S)$ under the action of $H$. Let $T$ be the normalization of $R$ in $K$. Then $T$ is a complete DVR, $\Gal(\QF(S)/K)=H$ and the ramification filtration on $H$ is induced from that of $G$, i.e. $H_i=G_i \cap H$. Moreover, if $H=G_j$ for some $j\ge 0$ then $(G/H)_i=G_i/H$ for $i\le j$ and $(G/H)_i=\{e\}$ for $i\ge j$.
\end{pro}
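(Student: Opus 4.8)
The plan is to treat the four assertions in increasing order of difficulty, since the first three are essentially formal and the genuine content sits in the quotient statement.

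First I would dispose of the claims that $\Gal(\QF(S)/K)=H$ and that $T$ is a complete DVR. The former is immediate from the fundamental theorem of Galois theory applied to the finite Galois extension $\QF(S)/\QF(R)$: the fixed field $K=\QF(S)^H$ satisfies $\Gal(\QF(S)/K)=H$. For $T$, the point is that $R$ is a \emph{complete} DVR, so $v_R$ extends uniquely to any finite extension of $\QF(R)$; restricting the normalized valuation $v_S$ to $K$ exhibits $T=S\cap K$ as the valuation ring of a discrete valuation on $K$, and completeness of $R$ together with finiteness of $T$ over $R$ forces $T$ to be a complete DVR with $\QF(T)=K$. I would cite the standard structure theory of complete discrete valuation rings here rather than reprove it.

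Next, the equality $H_i=G_i\cap H$, which I would argue holds essentially by definition, and this is the only place the normalization of $v_S$ matters. The ramification filtration on $H=\Gal(\QF(S)/K)$ is defined using the \emph{same} normalized valuation $v_S$ on $\QF(S)$ that defines the filtration on $G$, because $S$ is simultaneously the integral closure of $R$ and of $T$ in $\QF(S)$. Hence $H_i=\{\sigma\in H: v_S(\sigma x-x)\ge i+1\ \forall x\in S\}$, and the defining condition is literally the one cutting out $G_i$, so $H_i=H\cap G_i$.

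The hard part will be the quotient statement, where I set $G'=G/H$ with $H=G_j$ and work with $i_G(\sigma)=v_S(\sigma\pi-\pi)$ for a generator $\pi$ of $S$ over $R$, so that $\sigma\in G_i\iff i_G(\sigma)\ge i+1$. The key input, and the real technical obstacle, is Herbrand's averaging formula
$$i_{G'}(\bar\sigma)=\frac{1}{e_{\QF(S)/K}}\sum_{\tau\in H} i_G(\sigma\tau),\qquad \bar\sigma\ne e,$$
relating the intermediate filtration to the top one; proving it requires comparing a generator of $T$ with one of $S$, and I would either cite it from \cite{Serre-local.fields} or derive it by that comparison. Granting the formula, the argument is clean. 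Since $H=G_j\subseteq G_0$, the extension $\QF(S)/K$ is totally ramified, so $e_{\QF(S)/K}=|H_0|=|H|$. Now fix $\bar\sigma\ne e$, i.e.\ a lift $\sigma\notin G_j$, so $i_G(\sigma)\le j$. For every $\tau\in H=G_j$ we have $v_S(\tau\pi-\pi)\ge j+1$, and since $\sigma$ preserves $v_S$, the two summands of
$$\sigma\tau(\pi)-\pi=\sigma(\tau\pi-\pi)+(\sigma\pi-\pi)$$
have valuations $\ge j+1$ and $i_G(\sigma)\le j$ respectively; these are unequal, so the ultrametric law gives $i_G(\sigma\tau)=i_G(\sigma)$. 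Averaging then yields $i_{G'}(\bar\sigma)=i_G(\sigma)$ for every nontrivial $\bar\sigma$, independent of the chosen lift. Finally I would read off both clauses: since $i_{G'}(\bar\sigma)=i_G(\sigma)\le j$, no nontrivial element survives into $(G/H)_i$ once $i\ge j$, giving $(G/H)_i=\{e\}$; and for $i\le j$ the condition $i_{G'}(\bar\sigma)\ge i+1$ is equivalent to $i_G(\sigma)\ge i+1$, i.e.\ to $\sigma\in G_i$ (note $H=G_j\subseteq G_i$), which is exactly $\bar\sigma\in G_i/H$, so $(G/H)_i=G_i/H$.
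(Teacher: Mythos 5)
Your proof is correct, and it takes the same route as the paper's source: the paper gives no argument of its own for this proposition (it is quoted directly from Serre, IV, \S 1), and your reconstruction is precisely the one found there --- $H_i=G_i\cap H$ by definition-chasing once one notes that $S$ is also the integral closure of $T$ in $\QF(S)$, and the quotient statement deduced from Herbrand's averaging formula (Serre's Proposition 3) together with the observation that $i_G$ is constant on the nontrivial cosets of $G_j$, by the ultrametric inequality. The only caveat, which you share with the cited source rather than introduce, is the standing assumption that the residue extension is separable (needed for monogenicity of $S$ over $R$, hence for the use of $i_G(\sigma)=v_S(\sigma\pi-\pi)$, and for $e_{\QF(S)/K}=|H_0|$); this is harmless here since the paper only applies the proposition over perfect, indeed algebraically closed, residue fields.
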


\begin{pro}\cite[IV, 2, Corollary 2 and 3]{Serre-local.fields}\label{higher.ramification.group}
 The quotient group $G_0/G_1$ is a prime-to-$p$ cyclic group and if the residue field has characteristic $p>0$ then for $i\ge 1$, $G_i/G_{i+1}$ is an elementary abelian group of exponent $p$. In particular $G_1$ is a $p$-group.
\end{pro}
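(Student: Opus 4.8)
The plan is to construct, for each $i \geq 0$, an injective group homomorphism from $G_i/G_{i+1}$ into a group that is transparently of the asserted type: the multiplicative group $k^\times$ of the residue field $k$ of $S$ when $i = 0$, and the additive group $(k,+)$ when $i \geq 1$. After replacing $G$ by $G_0$ and $R$ by the ring of integers of the fixed field of $G_0$ (which is permitted by Proposition \ref{filtration}, since $H_i = G_i \cap H$), I may assume $S/R$ is totally ramified. Then $S = R[\pi]$ for a uniformizer $\pi$ of $S$, and membership satisfies the valuation criterion $\sigma \in G_i \iff v_S(\sigma(\pi) - \pi) \geq i+1$; this monogenic description is what makes the maps below computable, and I expect it to carry the technical weight of the argument.

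First I would treat $i = 0$. Define $\theta_0 \colon G_0 \to k^\times$ by $\sigma \mapsto \overline{\sigma(\pi)/\pi}$, the residue class of the unit $\sigma(\pi)/\pi$. I would check that it is independent of the choice of uniformizer (any other is $u\pi$ with $u \in S^\times$, and $\sigma(u)/u \equiv 1 \bmod \mathfrak{m}_S$ for $\sigma \in G_0$), that it is a homomorphism (writing $(\sigma\tau)(\pi)/\pi = \bigl(\sigma(\tau\pi)/\tau\pi\bigr)\bigl(\tau\pi/\pi\bigr)$ and using that $\tau\pi$ is again a uniformizer together with the independence just checked), and that its kernel is exactly $G_1$ by the valuation criterion. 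Thus $G_0/G_1 \hookrightarrow k^\times$. Since a finite subgroup of the multiplicative group of a field is cyclic, and since in characteristic $p$ the group $k^\times$ has no element of order $p$ (as $x^p = 1$ forces $(x-1)^p = 0$, i.e. $x = 1$), the image is cyclic of order prime to $p$.

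Next, for $i \geq 1$ I would build $\theta_i \colon G_i \to \mathfrak{m}_S^i/\mathfrak{m}_S^{i+1} \cong (k,+)$ by sending $\sigma$ to the class of $\sigma(\pi)/\pi - 1 = (\sigma(\pi) - \pi)/\pi$. The key computation is that, for $i \geq 1$, reduction identifies the multiplicative group $(1 + \mathfrak{m}_S^i)/(1 + \mathfrak{m}_S^{i+1})$ with the additive group $\mathfrak{m}_S^i/\mathfrak{m}_S^{i+1}$, because $(1+x)(1+y) = 1 + (x+y) + xy$ with $xy \in \mathfrak{m}_S^{2i} \subseteq \mathfrak{m}_S^{i+1}$. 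Consequently the same manipulation as in the $i=0$ case now shows $\theta_i$ is a homomorphism into the additive group, again with kernel $G_{i+1}$. Hence $G_i/G_{i+1} \hookrightarrow (k,+)$, which in characteristic $p$ is an elementary abelian $p$-group, giving the exponent-$p$ statement.

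Finally, $G_1$ being a $p$-group follows formally: the filtration $G_1 \supseteq G_2 \supseteq \cdots$ terminates (for $i \geq \max_{\sigma \neq e} v_S(\sigma(\pi) - \pi)$ only the identity survives), so $G_1$ admits a subnormal series whose successive quotients $G_i/G_{i+1}$ are each elementary abelian $p$-groups, and multiplying their orders shows $|G_1|$ is a power of $p$. I expect the genuine obstacle throughout to be the well-definedness and homomorphism verifications for the $\theta_i$: the natural formulas all involve the chosen uniformizer, so one must repeatedly confirm both independence of that choice and compatibility with composition, which is precisely where the monogenic normalization and the valuation criterion for membership in $G_i$ do the real work.
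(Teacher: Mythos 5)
Your proposal is correct, and it is essentially the proof in the source the paper cites for this statement (the paper itself gives no argument, only the reference to Serre's \emph{Local Fields}, IV, \S 2): the maps $\sigma \mapsto \sigma(\pi)/\pi$ injecting $G_0/G_1$ into $k^\times$ and $G_i/G_{i+1}$ into $\mathfrak{m}_S^i/\mathfrak{m}_S^{i+1} \cong (k,+)$, after reducing to the totally ramified monogenic case $S = R[\pi]$, are exactly Serre's construction. All the verifications you outline (independence of the uniformizer, the homomorphism property via $(\sigma\tau)(\pi)/\pi = \bigl(\sigma(\tau\pi)/\tau\pi\bigr)\bigl(\tau\pi/\pi\bigr)$, kernel equal to $G_{i+1}$, and the concluding counting argument for $|G_1|$) are sound.
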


\begin{lemma}\label{Hilbert-transitivity}
 Let $S/R$ be an extension of DVRs such that $\QF(S)/\QF(R)$ is Galois with $\Gal(\QF(S)/\QF(R))=G$. Let $H$ be a normal subgroup of $G$ and $T$ be the normalization of $R$ in $\QF(S)^{H}$ then 
 $$\sum_{i=0}^{\infty}(|G_i|-1)=e_{S/T}\sum_{i=0}^{\infty}(|(G/H)_i|-1) + \sum_{i=0}^{\infty}(|H_i|-1)$$
\end{lemma}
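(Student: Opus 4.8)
The plan is to recognize each of the three sums as the valuation of a different and then invoke the transitivity of the different for the tower $R\subseteq T\subseteq S$. Recall Hilbert's formula (Serre, \emph{Local Fields}, IV, Prop.~4): for a finite Galois extension $S/R$ of complete DVRs with group $G$ and ramification filtration $G_i$, the different $\mathfrak{d}_{S/R}$ satisfies $v_S(\mathfrak{d}_{S/R})=\sum_{i\ge 0}(|G_i|-1)$. Thus the left-hand side of the asserted identity is exactly $v_S(\mathfrak{d}_{S/R})$, and the two summands on the right will be matched to the differents of $S/T$ and $T/R$.

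First I would set up the three extensions in the tower. Since $H$ is a subgroup of $G$, Proposition \ref{filtration} guarantees that $T$ is a complete DVR with $\Gal(\QF(S)/\QF(T))=H$ and that the ramification filtration of this extension is $H_i=G_i\cap H$; hence Hilbert's formula gives $v_S(\mathfrak{d}_{S/T})=\sum_{i\ge 0}(|H_i|-1)$, the last summand on the right. Because $H$ is \emph{normal} in $G$, the intermediate field $\QF(T)=\QF(S)^{H}$ is Galois over $\QF(R)$ with $\Gal(\QF(T)/\QF(R))=G/H$; writing $(G/H)_i$ for the (lower-numbering) ramification filtration of this extension $T/R$, Hilbert's formula yields $v_T(\mathfrak{d}_{T/R})=\sum_{i\ge 0}(|(G/H)_i|-1)$.

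It then remains to combine these via transitivity of the different, $\mathfrak{d}_{S/R}=\mathfrak{d}_{S/T}\cdot(\mathfrak{d}_{T/R}S)$. Applying $v_S$ and using that $\mathfrak{d}_{T/R}$ is a power of the maximal ideal of $T$, so that $v_S(\mathfrak{d}_{T/R}S)=e_{S/T}\,v_T(\mathfrak{d}_{T/R})$, I obtain
\[
v_S(\mathfrak{d}_{S/R})=v_S(\mathfrak{d}_{S/T})+e_{S/T}\,v_T(\mathfrak{d}_{T/R}).
\]
Substituting the three Hilbert-formula expressions gives precisely the claimed identity.

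The computation is essentially a matter of organizing standard facts, so the main point to be careful about is the interpretation of the filtrations on the sub- and quotient groups, together with the placement of the ramification index $e_{S/T}$: one must note that $(G/H)_i$ here denotes the \emph{intrinsic} lower-numbering filtration of the extension $T/R$ (not the Herbrand image), which is exactly what Hilbert's formula for $T/R$ feeds in, and that the factor $e_{S/T}$ arises solely from converting $v_T$ into $v_S$ on the extended ideal $\mathfrak{d}_{T/R}S$. A minor technical check is that all three extensions satisfy the hypotheses (complete DVRs with the relevant residue-field separability) under which Hilbert's formula and transitivity of the different are valid; this is furnished by Proposition \ref{filtration}.
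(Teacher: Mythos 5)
Your proof is correct and takes essentially the same route as the paper's: it combines transitivity of the different, Hilbert's different formula for each extension in the tower $R\subseteq T\subseteq S$ (using Proposition \ref{filtration} for the induced filtration $H_i=G_i\cap H$ and normality of $H$ to make $T/R$ Galois with group $G/H$), and the conversion $v_S=e_{S/T}v_T$ on $\QF(T)$. The paper compresses this into one sentence; your write-up supplies the same argument in detail, including the correct reading of $(G/H)_i$ as the intrinsic lower-numbering filtration of $T/R$.
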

\begin{proof}
 This follows from the transitivity of the different $\scrD_{S/R}=\scrD_{S/T}\scrD_{T/R}$ \cite[III, 4, Proposition 8]{Serre-local.fields}, Hilbert's different formula $d_{S/R}=\sum_{i=0}^{\infty}(|G_i|-1)$ (\cite[Theorem 3.8.7]{book-stichtenoth}) and $v_S(x)=e_{S/T}v_T(x)$ for $x\in \QF(T)$.
\end{proof}

\begin{lemma}\label{lemma.different}
 Let $S/R$ be a totally ramified extension of complete DVRs over a perfect field $k$ of characteristic $p>0$. Suppose $\QF(S)$ is generated over $\QF(R)$ by $\alpha\in \QF(S)$ with $\alpha^p-\alpha\in \QF(R)$ and $v_R(\alpha^p-\alpha)=-1$. Then the degree of the different $d_{S/R}=2|G|-2$. 
\end{lemma}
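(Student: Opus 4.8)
The field extension $\QF(S)/\QF(R)$ is Artin--Schreier, generated by a root of $X^p - X - a$ with $a = \alpha^p - \alpha \in \QF(R)$; since $v_R(a) = -1$ is prime to $p$, no element $b$ of $\QF(R)$ satisfies $b^p - b = a$ (the valuation of $b^p-b$ is either $\geq 0$ or a multiple of $p$), so the polynomial is irreducible, $G \cong \ZZ/p\ZZ$, and $|G| = p$. The plan is to compute the ramification filtration $\{G_i\}$ explicitly and then read off the different from Hilbert's formula $d_{S/R} = \sum_{i \geq 0}(|G_i| - 1)$, the formula already invoked in the proof of Lemma \ref{Hilbert-transitivity}.

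First I would pin down the valuation of $\alpha$. As the extension is totally ramified of degree $p$, we have $v_S = p\, v_R$ on $\QF(R)$, so $v_S(a) = -p$; writing $v_S(\alpha)=m$ and comparing valuations in $\alpha^p - \alpha = a$, the case $m \geq 0$ gives $v_S(a)\geq 0$, impossible, so $m<0$, the term $\alpha^p$ dominates, and $pm = -p$ forces $v_S(\alpha) = -1$. Hence $\pi := \alpha^{-1}$ is a uniformizer of $S$, and since $S/R$ is a totally ramified extension of complete DVRs we have $\cO_S = \cO_R[\pi]$. This legitimizes computing the filtration on the single element $\pi$: the integers $i_G(\sigma) := v_S(\sigma\pi - \pi)$ determine it via $\sigma \in G_i \iff i_G(\sigma) \geq i+1$.

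Next I would compute $i_G(\sigma)$ for a nontrivial $\sigma \in G$. The $\QF(R)$-conjugates of $\alpha$ are the roots $\alpha + c$, $c \in \FF_p$, of $X^p-X-a$, so $\sigma\alpha = \alpha + c$ for some $c \in \FF_p^\times$, a unit in $\cO_S$. Then
$$\sigma\pi - \pi = \frac{1}{\alpha + c} - \frac{1}{\alpha} = \frac{-c}{\alpha(\alpha + c)},$$
and since $v_S(\alpha) = v_S(\alpha + c) = -1$ while $v_S(c) = 0$, one gets $i_G(\sigma) = 0 - (-1) - (-1) = 2$, independent of $\sigma$. Therefore $G_{-1} = G_0 = G_1 = G$ and $G_i = \{e\}$ for $i \geq 2$, whence
$$d_{S/R} = (|G_0| - 1) + (|G_1| - 1) = 2(p-1) = 2|G| - 2.$$

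The only genuinely delicate points are the two valuation facts: deducing $v_S(\alpha) = -1$ from the defining equation, and the identity $\cO_S = \cO_R[\alpha^{-1}]$ that reduces the filtration to a computation on one uniformizer rather than on all of $S$. Both are standard for totally ramified extensions of complete DVRs, but they are where the real content sits; everything after the jump $i_G(\sigma) = 2$ is bookkeeping with Hilbert's formula.
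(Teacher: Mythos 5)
Your argument is correct, but only for the statement read literally, with a \emph{single} Artin--Schreier generator: that reading forces $[\QF(S):\QF(R)]=p$ and $G\cong\ZZ/p\ZZ$. The paper intends (and later needs) the lemma in the generality where $\QF(S)$ is generated by the collection of \emph{all} elements $\alpha$ with $\alpha^p-\alpha\in\QF(R)$ and $v_R(\alpha^p-\alpha)=-1$, so that $G\cong(\ZZ/p\ZZ)^l$ may be elementary abelian of arbitrary rank. This is exactly how the lemma is invoked in the proof of Proposition \ref{characterize.G_2} (where $L$ is generated by all such $\alpha$, and $\Gal(L/\QF(R))=G/H$ is a compositum of $\ZZ/p\ZZ$-extensions) and in Theorem \ref{reducing-inertia} (where $\Gal(L/\QF(R))=I_1/P$ is elementary abelian, typically not cyclic); note also that the conclusion is phrased as $2|G|-2$ rather than $2p-2$. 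Your treatment of the cyclic case --- $v_S(\alpha)=-1$, so $\pi=\alpha^{-1}$ is a uniformizer with $S=R[\pi]$, then $v_S(\sigma\pi-\pi)=2$ for $\sigma\neq e$, hence $G_1=G$, $G_2=\{e\}$, and Hilbert's formula gives $2p-2$ --- is correct and clean, and differs mildly from the paper's, which instead computes $d_{S/R}=v_S(m'(y))$ for the minimal polynomial $m(T)=T^p+xT^{p-1}-x$ of the uniformizer. But this cyclic case is only the base case $l=1$ of the paper's induction.

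The missing content is the compositum case $l\ge 2$, which is where nearly all of the paper's proof is spent. The hypothesis hands you generators $\alpha_1,\dots,\alpha_l$ with simple poles over $\QF(R)$, but to run a tower argument one must know that over each intermediate field $L_i=\QF(R)(\alpha_1,\dots,\alpha_i)$ the remaining extension is again generated by Artin--Schreier elements whose $\wp$-images have valuation $-1$ with respect to $T_i$; this is not automatic, and the paper proves it by explicitly re-normalizing the generators (killing integral tails, then setting $\beta_{i+1,j}=\beta_{i,j}-a_j\beta_{i,i+1}$ with $a_j^p=c_{i+1}^{-1}c_j$), after which transitivity of the different gives $d_{S/R}=p(2p^{l-1}-2)+(2p-2)=2p^l-2$. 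Equivalently, in your language: you would need to show that the full group $G=(\ZZ/p\ZZ)^l$ still has all ramification breaks at $1$, i.e.\ $G_1=G$ and $G_2=\{e\}$, and for that one must rule out cancellation of polar parts in nontrivial $\FF_p$-linear combinations of the classes $\alpha_i^p-\alpha_i$ modulo $\wp(\QF(R))$ --- precisely the substance of the paper's inductive claim (the paper rules such cancellation out using total ramification). So the ``real content'' is not the two valuation facts you flag, but the case $l\ge 2$, and as written your proof establishes strictly less than what Proposition \ref{characterize.G_2} and Theorem \ref{reducing-inertia} consume.
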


\begin{proof}
 Note that since $S/R$ is totally ramified, their residue fields are same and by \cite{cohen} the residue field is isomorphic to the field of coefficient of $R$ and $S$. Replacing $k$ by this residue field we may assume that the residue fields of $S$ and $R$ are $k$.

 We know that $|G|=p^l$ for some $l\ge 0$. We will prove the lemma by induction on $l$. If $l=0$ then the statement is trivial. Suppose $l=1$. Then by hypothesis there exists $\alpha \in \QF(S)$ with $\alpha^p-\alpha \in R$ and $v_R(\alpha^p-\alpha)=-1$. Let $x=(\alpha^p-\alpha)^{-1}$ and $y=\alpha^{-1}$ then $v_S(x)=e_{S/R}v_R(x)=p$ and $v_S(y)=1$. By Cohen structure theorem $R=k[[x]]$ and $S=k[[y]]$. Also we have that $m(y)=0$ where $m(T)=T^p+xT^{p-1}-x \in R[T]$. So $m(T)$ is a minimal polynomial of $y$ over $\QF(R)$. By \cite[III, 6, Corollary 2]{Serre-local.fields}, $d_{S/R}=v_S(m'(y))$. But $m'(y)=-xy^{p-2}$. So $d_{S/R}=v_S(x)+(p-2)v_S(y)=2p-2$.

 Now in general assume $l\ge 1$. Note that $G=(\ZZ/p\ZZ)^l$, so by hypothesis there exist $\alpha_1,\ldots \alpha_l\in \QF(S)$ such that
 \begin{enumerate}
  \item $\alpha_i^p-\alpha_i=u_ix^{-1}$ for some units $u_i\in R$ and 
  \item $\QF(R)(\alpha_i)$ and $L_{i-1}=\QF(R)(\alpha_j| 1\le j < i)$ are linearly disjoint over $\QF(R)$ for $1 \le i \le l$.
 \end{enumerate}

 Note that $L_0=\QF(R)$ and $L_l=\QF(S)$. Let $T_i$ be the normalization of $R$ in $L_i$ for $0\le i\le l$. Let $y_0=x$. For simplicity, let $v_i$ denote the valuation $v_{T_i}$ for $0\le i\le l$. Since $S/R$ is totally ramified, so is $T_i/T_{i-1}$ for $1\le i \le l$. Hence $e_{T_i/T_{i-1}}=p$. Note that $v_0(y_0)=1$.

 \begin{clm}
   For each $0\le i\le l-1$ and $i < j \le l$, there exist $\beta_{i,j}\in \QF(S)$ such that the following holds
   \begin{enumerate}
     \item $\beta_{i,j}^p-\beta_{i,j}\in L_i$,
     \item $v_i(\beta_{i,j}^p-\beta_{i,j})=-1$,
     \item $L_i(\beta_{i,j}; i < j \le n)=L_n$ for $i<n\le l-1$ 
     \item $v_{i+1}(\beta_{i,i+1})=-1$
   \end{enumerate}
   We define $y_{i+1}=\beta_{i,i+1}^{-1}$.
 \end{clm}
 \begin{proof}[Proof of the claim]
   We shall proof this by induction. For $i=0$, we take $\beta_{0,j}=\alpha_j$. The first and the second statement is same as the hypothesis of the lemma. The third statement follows from the definition of $L_n$'s. For the fourth statement note that $\beta_{0,1}=\alpha_1$. Since $v_1(\alpha_1)< 0$, we have $v_1(\alpha_1^p)= v_1(\alpha_1^p-\alpha_1)=v_1(x^{-1})$. So $v_1(\alpha_1)=p^{-1}v_1(x^{-1})=p^{-1}pv_0(x^{-1})=-1$.

   Suppose the claim is true for a fixed $i\ge 0$ and $i<l-1$. Then we have $\beta_{i,j}\in \QF(S)$ for $i < j \le l$ satisfying the four properties listed in the claim. Also note that $v_i(y_i)=1$. So $T_i=k[[y_i]]$. Hence we can write explicitly $\beta_{i,j}^p-\beta_{i,j}=c_jy_i^{-1}+d_j+f_j(y_i)$ where $c_j,d_j\in k$, $c_j\ne 0$ and $f_j(y_i)\in T_i$ has order at least 1. Let $g_j=f_j+f_j^p+f_j^{p^2}+\ldots \in T_i$ then $g_j-g_j^p=f_j$. Let $\gamma_{i,j}=\beta_{i,j}-g_j$. Then $\gamma_{i,j}$ also satisfies the four properties of the claim. Moreover $\gamma_{i,j}^p-\gamma_{i,j}=c_jy_i^{-1}+d_j$. Hence replacing $\beta_{i,j}$ by $\gamma_{i,j}$, we may assume 
   \begin{equation}
    \beta_{i,j}^p-\beta_{i,j}=c_jy_i^{-1}+d_j
   \end{equation}

   Now for any $j$ such that $i+1< j \le l$. We define $\beta_{i+1,j}=\beta_{i,j}-a_j\beta_{i,i+1}$ where $a_j\in k$ is such that $a_j^p=c_{i+1}^{-1}c_j$. Note that $k$ is perfect so such an $a_j$ exists. 

   We shall verify that these $\beta_{i+1,j}$ satisfy the four assertions of the claim. Firstly, since $L_{i+1}=L_i(\beta_{i,i+1})$, for $i+1<n\le l-1$ we have 
   $$L_{i+1}(\beta_{i+1,j}; i+1 <j\le n)=L_i(\beta_{i,j}; i < j\le n)=L_n$$ 
   Hence the third property is satisfied.
 
   We Compute
   \begin{align*}
    \beta_{i+1,j}^p-\beta_{i+1,j}&=\beta_{i,j}^p-\beta_{i,j}-a_j^p\beta_{i,i+1}^p+a_j\beta_{i,i+1}\\
       &=c_jy_i^{-1}+d_j-a_j^p(\beta_{i,i+1}+c_{i+1}y_i^{-1}+d_{i+1})+a_j\beta_{i,i+1}\\
       &=(c_j-a_j^pc_{i+1})y_i^{-1}+d_j-a^pd_{i+1} +(a_j-a_j^p)\beta_{i,i+1}\\
       &=(a_j-a_j^p)\beta_{i,i+1}+d_j-a_j^pd_{i+1}
   \end{align*}
  Hence $\beta_{i+1,j}^p-\beta_{i+1,j}\in L_{i+1}$. If $a_j=a_j^p$ then $\beta_{i+1,j}^p-\beta_{i+1,j}\in k$ but this will lead to a residue field extension for $S/R$ which contradicts the assumption that $S/R$ is totally ramified. Hence $a_j\ne a_j^p$ and
  \begin{equation}
    \beta_{i+1,j}^p-\beta_{i+1,j}=(\text{nonzero constant})\beta_{i,i+1}+\text{constant}    
  \end{equation}
  So $v_{i+1}(\beta_{i+1,j}^p-\beta_{i+1,j})=v_{i+1}(\beta_{i,i+1})=-1$. We have now verified the first two properties of the claim too.

  Finally, $v_{i+2}(\beta_{i+1,i+2}^p)=v_{i+2}(\beta_{i+1,i+2}^p-\beta_{i+1,i+2})=v_{i+2}(\beta_{i,i+1})$. So we deduce that $v_{i+2}(\beta_{i+1,i+2})=p^{-1}v_{i+2}(\beta_{i,i+1})=p^{-1}pv_{i+1}(\beta_{i,i+1})=-1$. This completes the proof of the claim.
 \end{proof}

 The field extension $L_{l-1}/\QF(R)$ is Galois with Galois group $(\ZZ/p\ZZ)^{l-1}$ and $\Gal(\QF(S)/L_{l-1})=\ZZ/p\ZZ$. Moreover, both $T_{l-1}/R$ and $S/T_{l-1}$ are totally ramified extension. Note that $L_{l-1}=\QF(R)(\alpha_1,\ldots,\alpha_{l-1})$. So by induction hypothesis $d_{T_{l-1}/R}=2p^{l-1}-2$. 

 Since $\QF(S)=L_{l-1}(\beta_{l-1,l})$, $\beta_{l-1,l}^p-\beta_{l-1,l}\in L_{l-1}$ and $v_{l-1}(\beta_{l-1,l}^p-\beta_{l-1,l})=-1$, we have $d_{S/T_{l-1}}=2p-2$ by ``$l=1$ case''. 

 Finally using the transitivity of different, we see that $d_{S/R}=e_{S/T_{l-1}}d_{T_{l-1}/R}+d_{S/T_{l-1}}=p(2p^{l-1}-2)+2p-2= 2p^l-2$. This completes the proof of the lemma.
\end{proof}

\begin{pro}\label{filtration.Galois}
 Let $i\ge 1$ and $S/R$ be a finite extension of complete DVRs over a perfect field $k$ of characteristic $p$ such that $\Gal(\QF(S)/\QF(R))=G=G_i$. Let $L$ be the subfield of $\QF(S)$ generated over $\QF(R)$ by all $\alpha\in \QF(S)$ such that $v_R(\alpha^p-\alpha)=-i$. Then $G_{i+1}\supset \Gal(\QF(S)/L)$. 
\end{pro}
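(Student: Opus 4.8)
The plan is to descend to the elementary abelian layer cut out by $G_{i+1}$ and exhibit its fixed field as being generated by Artin--Schreier elements of the prescribed pole order. Set $M=\QF(S)^{G_{i+1}}$. Since ramification groups are normal, $M/\QF(R)$ is Galois with group $V:=G/G_{i+1}=G_i/G_{i+1}$, which is elementary abelian of exponent $p$ by Proposition \ref{higher.ramification.group}. By the Galois correspondence the desired inclusion $G_{i+1}\supseteq\Gal(\QF(S)/L)$ is equivalent to $M\subseteq L$, so it suffices to produce a generating set of $M$ over $\QF(R)$ consisting of elements $\alpha$ with $v_R(\alpha^p-\alpha)=-i$.

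First I would pin down the ramification of $V$. Because $G=G_i$, the decreasing filtration satisfies $G_m=G$ for all $-1\le m\le i$; applying the last assertion of Proposition \ref{filtration} with $H=G_{i+1}$ (so $j=i+1\ge 0$) yields $V_m=G_m/G_{i+1}=V$ for $m\le i$ and $V_{i+1}=\{e\}$. Thus $V$ has a single ramification break, located at $i$. Since upper numbering is compatible with passage to quotients, each index-$p$ subgroup $H\le V$ (automatically normal) gives a degree-$p$ Artin--Schreier subextension $E_H=M^{H}/\QF(R)$ whose unique break is again $i$.

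Next I would invoke the standard normalization of a totally ramified $\ZZ/p\ZZ$ Artin--Schreier extension. Writing $E_H=\QF(R)(\alpha_H)$ with $\alpha_H^p-\alpha_H=f_H\in\QF(R)$, one reduces $f_H$ modulo $\{g^p-g:g\in\QF(R)\}$ to minimal pole order; using that the residue field is perfect (the reduction step is exactly the one carried out for the claim inside Lemma \ref{lemma.different}), this minimal order is prime to $p$ and equals the break. As the break is $i$, this forces $(i,p)=1$ and supplies a representative with $v_R(f_H)=-i$; adjusting $\alpha_H$ by the corresponding element of $\QF(R)$ gives $v_R(\alpha_H^p-\alpha_H)=-i$. (When $p\mid i$ no such break can occur, so $G_{i+1}=G_i=G$, $M=\QF(R)$, and the statement is trivial.) Finally, choosing index-$p$ subgroups $H_1,\dots,H_r$ whose associated characters form a basis of the dual of $V$ makes $\alpha_{H_1},\dots,\alpha_{H_r}$ a generating set of $M$ over $\QF(R)$; each $\alpha_{H_t}$ lies in the generating set defining $L$, whence $M\subseteq L$.

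The main obstacle is the third step: transporting the single break of $V$ to each degree-$p$ quotient, and then extracting from ``break $=i$'' an Artin--Schreier representative of pole order exactly $i$. This rests on the compatibility of ramification with quotients (Herbrand's theorem in upper numbering) together with the fact that over a perfect residue field the conductor of a $\ZZ/p\ZZ$ Artin--Schreier extension is prime to $p$ and coincides with its ramification break --- the same circle of ideas underlying the different computation in Lemma \ref{lemma.different}. The remaining steps are routine bookkeeping in the Galois correspondence; I note in passing that the symmetric argument shows each generator of $L$ already lies in $M$, so in fact $L=M$, though only the inclusion $M\subseteq L$ is needed here.
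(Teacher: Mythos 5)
Your argument is correct, and its skeleton matches the paper's: both proofs descend to $M=\QF(S)^{G_{i+1}}$, use Proposition \ref{filtration} to see that $V=G/G_{i+1}$ is elementary abelian with $V_m=V$ for $m\le i$ and $V_{i+1}=\{e\}$, and reduce to showing that each degree-$p$ piece of $M$ over $\QF(R)$ is generated by an Artin--Schreier element $\alpha$ with $v_R(\alpha^p-\alpha)=-i$. Where you genuinely diverge is in how the pole order is pinned down. The paper never touches upper numbering or Herbrand's theorem: after normalizing a $\ZZ/p\ZZ$-subextension $L''\subset M$ to a representative of pole order $r$ prime to $p$, it computes three differents --- $d_{T''/R}=(r+1)(p-1)$ by the explicit Artin--Schreier calculation, and $d_{T'/R}$, $d_{T'/T''}$ by Hilbert's formula, using only the compatibility of the \emph{lower}-numbering filtration with \emph{subgroups} (Proposition \ref{filtration}) --- and then solves the transitivity relation of Lemma \ref{Hilbert-transitivity} to force $r=i$. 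You instead transfer the break $i$ to each degree-$p$ \emph{quotient} $V/H$ via compatibility of the \emph{upper}-numbering filtration with quotients, and then identify the break with the normalized pole order. Both are sound; the paper's route stays entirely inside the toolkit it has built (at the cost of a page of different bookkeeping), whereas yours is shorter and more conceptual but imports Herbrand's theorem and the break-equals-conductor identity for degree-$p$ extensions as external (if standard, and available in the cited Serre and Stichtenoth) facts. Your route also yields, as you note in passing, the stronger conclusion $L=\QF(S)^{G_{i+1}}$ for every $i\ge 1$, which the paper only establishes for $i=1$ (Proposition \ref{characterize.G_2}) by a separate different computation (Lemma \ref{lemma.different}). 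Two small corrections: the prime-to-$p$ reduction you cite is actually carried out in the paper's proof of this very proposition, not in the claim inside Lemma \ref{lemma.different} (whose reduction instead kills terms of positive order); and your explicit dispatch of the case $p\mid i$ (forcing $G_{i+1}=G$, so the statement is vacuous) is a point the paper handles only implicitly through its $G_{i+1}=G$ case.
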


\begin{proof}
  Let $L'=\QF(S)^{G_{i+1}}$ and $H=\Gal(\QF(S)/L)\le G$. Let $T$ and $T'$ be the normalization of $R$ in $L$ and $L'$ respectively. Since $G_{i+1}$ is a normal subgroup of $G$, the extension $L'/\QF(R)$ is Galois and $\Gal(L'/\QF(R))=G/G_{i+1} (=\bar G$ say). Moreover the ramification filtration on $\bar G$ is given by $\bar G_i=\bar G$ and $\bar G_{i+1}=\{e\}$ (Proposition \ref{filtration}). If $G_{i+1}=G$ then $H\subset G_{i+1}$ and we are done. So we may assume $G_{i+1}\ne G$. By Proposition \ref{higher.ramification.group} $\bar G\ne \{e\}$ is isomorphic to the direct sum of copies of $\ZZ/p\ZZ$. 

  Let $L''\subset L'$ be any $\ZZ/p\ZZ$-extension of $\QF(R)$. By Artin-Schrier theory there exists $\alpha\in L''\setminus \QF(R)$ such that $\beta:=\alpha^p-\alpha \in \QF(R)$. Let $x$ be a local parameter of $R$ then $R=k[[x]]$. If $v_R(\beta)>0$ then $\alpha=c-\beta-\beta^p-\beta^{p^2}-\ldots \in R$ for some $c\in \FF_p$. So $v_R(\beta)\le 0$. Moreover since $G_0=G$, $S/R$ is totally ramified. So $v_R(\beta)\ne 0$ and hence $v_R(\beta)\le 0$. If $v_R(\beta)$ is a multiple of $p$ then $\beta=c_0x^{pl}+c_1x^{pl+1}+\ldots$, for some integer $l< 0$. Let $c\in k$ be such that $c^p=c_0$ and let $\alpha'=\alpha-cx^l$. Then $\beta':=\alpha'^p-\alpha'=\beta-c_0x^{pl}+cx^l$, $v_R(\beta')>v_R(\beta)$ and $L''=\QF(R)(\alpha)=\QF(R)(\alpha')$. Hence by such modifications we may assume $v_R(\alpha^p-\alpha)=-r<0$ is coprime to $p$. Let $T''$ be the normalization of $R$ in $L''$. By explicit calculation of the different and using Hilbert's different formula, the degree of the different $d_{T''/R}=(r+1)(p-1)$. Since $\bar G_{i+1}$ is trivial and $\bar G_i=\bar G$, by Hilbert's different formula $d_{T'/R}=(i+1)|\bar G|-i-1$. Let $\bar H$ be the index $p$ subgroup of $\bar G$ such that $L''=L^{\bar H}$. Then the ramification filtration on $\bar H$ (coming from the extension $T'/T''$) is induced from $\bar G$. Hence $d_{T'/T''}=(i+1)|\bar H|-i-1$. Using Lemma \ref{Hilbert-transitivity} and $e_{T'/T''}=|\bar H|$, we obtain
  $$ (i+1)|\bar G|-i-1= |\bar H|(r+1)(p-1)+(i+1)|\bar H|-i-1$$
  Using $|\bar G|=p|\bar H|$ above and solving for $r$, one gets $r=i$. Hence $L''\subset L$. Since $L''$ was an arbitrary $\ZZ/p\ZZ$-extension of $\QF(R)$ contained in $L'$ and $L'$ is generated by such $\ZZ/p\ZZ$-extensions, we have that $L'\subset L$. So by the fundamental theorem of Galois theory $H\subset G_2$.
\end{proof}

\begin{pro}\label{characterize.G_2}
 Let $S/R$ be a finite extension of complete DVRs over a perfect field $k$ of characteristic $p$ such that $\Gal(\QF(S)/\QF(R))=G=G_1$. Let $L$ be the subfield of $\QF(S)$ generated over $\QF(R)$ by all $\alpha\in \QF(S)$ such that $v_R(\alpha^p-\alpha)=-1$. Then $G_2=\Gal(\QF(S)/L)$. 
\end{pro}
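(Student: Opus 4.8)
The plan is to get one inclusion for free and work only for the other. The inclusion $\Gal(\QF(S)/L)\subseteq G_2$ is precisely the case $i=1$ of Proposition \ref{filtration.Galois}, so all the content lies in the reverse inclusion $G_2\subseteq\Gal(\QF(S)/L)$. Writing $N=\QF(S)^{G_2}$, this amounts to proving $L\subseteq N$, and since $L$ is generated over $\QF(R)$ by the elements $\alpha$ with $v_R(\alpha^p-\alpha)=-1$, I would reduce to showing that each such $\alpha$ lies in $N$, i.e. is fixed by $G_2$.

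To analyse a single such $\alpha$, set $\beta=\alpha^p-\alpha\in\QF(R)$. By the same valuation argument used in Proposition \ref{filtration.Galois} one has $\alpha\notin\QF(R)$, so $M:=\QF(R)(\alpha)$ is a $\ZZ/p\ZZ$-Artin--Schreier extension inside $\QF(S)$, and it is Galois over $\QF(R)$ because all conjugate roots $\alpha+j$ ($j\in\FF_p$) already lie in $M$. Let $K=\Gal(\QF(S)/M)\trianglelefteq G$ and let $\chi\colon G\onto G/K\cong\FF_p$ be the quotient map; then $\alpha\in N$ is equivalent to $G_2\subseteq K$, i.e. $\chi(G_2)=0$. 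The explicit different formula $d_{M/\QF(R)}=(r+1)(p-1)$ with $r=1$ (exactly as in Proposition \ref{filtration.Galois}), together with Hilbert's formula, shows that $M/\QF(R)$ has its unique ramification break at $1$, in both the lower and the upper numbering.

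The key step is to pass to the upper numbering, whose compatibility with quotients is Herbrand's theorem \cite[IV]{Serre-local.fields}. Because $G=G_1=G_0$, the Herbrand function is the identity on $[0,1]$ and has slope $1/(G:G_2)$ on $[1,2]$; inverting it I expect $G^{v}=G$ for $v\le 1$ and $G^{v}=G_2$ for $v$ just above $1$, so that $G^{1+\varepsilon}=G_2$ for all small $\varepsilon>0$. On the other hand $(G/K)^{v}=G^{v}K/K$, and since $M/\QF(R)$ has upper break $1$ its group is trivial for $v>1$; hence $G^{1+\varepsilon}\subseteq K$, giving $G_2\subseteq K$ and therefore $\alpha\in N$. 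Ranging over all generators yields $L\subseteq N$, which combined with Proposition \ref{filtration.Galois} gives $L=N$ and, passing to Galois groups, $G_2=\Gal(\QF(S)/L)$.

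I expect the genuinely delicate point to be exactly this reverse inclusion. A direct computation in the lower numbering is too crude: since $S/R$ is totally ramified one finds $v_S(\alpha)=-|G|/p$, and for $\sigma\in G_2$ the standard estimate only gives $v_S(\sigma\alpha-\alpha)\ge v_S(\alpha)+2$, which fails to force $\sigma\alpha=\alpha$ as soon as $|G|>p$. It is precisely the passage to the upper numbering and the identity $G^{1+\varepsilon}=G_2$ that rescue the argument, and this identity is where the hypothesis $G=G_1$—as opposed to $G=G_i$ for general $i$ in Proposition \ref{filtration.Galois}—is essential, which is why equality (rather than a single inclusion) holds here.
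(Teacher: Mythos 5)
Your proof is correct, but it takes a genuinely different route from the paper's. Both arguments dispose of the inclusion $\Gal(\QF(S)/L)\subseteq G_2$ via Proposition \ref{filtration.Galois}; the difference is in the reverse inclusion. The paper works with the whole compositum $L$ at once: it invokes Lemma \ref{lemma.different} (the inductive computation showing $d_{T/R}=2|G/H|-2$ for the normalization $T$ of $R$ in $L$, i.e.\ that the compositum has no ramification jumps above $1$), feeds this into Lemma \ref{Hilbert-transitivity}, and uses positivity of the terms $|G_i|-|H_i|$ to conclude $G_i=H_i$ for all $i\ge 2$, whence $G_2\subseteq H$. You instead argue one Artin--Schreier generator at a time, needing only the trivial rank-one different formula $d_{M/\QF(R)}=2(p-1)$ (so the single extension $M=\QF(R)(\alpha)$ has its unique break at $1$ in both numberings), and then import Herbrand's theorem $(G/K)^v=G^vK/K$ together with the identity $G^{1+\varepsilon}=G_2$, which is where the hypothesis $G=G_0=G_1$ enters; your verification of that identity (slope $1/(G:G_2)$ of the Herbrand function on $[1,2]$) is accurate, as is your observation that the crude lower-numbering estimate $v_S(\sigma\alpha-\alpha)\ge v_S(\alpha)+2$ cannot work once $|G|>p$. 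The trade-off: your argument is shorter and entirely bypasses Lemma \ref{lemma.different}, the most laborious proof in the paper's Section 2, at the price of invoking upper numbering and compatibility with quotients; the paper's argument stays within lower numbering and different computations, and moreover Lemma \ref{lemma.different} is not wasted effort there, since it is reused in Theorem \ref{reducing-inertia} to compute the genus of $Z$ via Riemann--Hurwitz. One stylistic caveat: since your conclusion from Herbrand is $G_2\subseteq K=\Gal(\QF(S)/\QF(R)(\alpha))$ for every generator $\alpha$, you should note explicitly (as you implicitly do) that $G_2$ then fixes the field generated by all such $\alpha$ over $\QF(R)$, which is exactly $L$; with that said, the proof is complete.
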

\begin{proof}
  In view of Proposition \ref{filtration.Galois}, it is enough to show $G_2\subset H:=\Gal(\QF(S)/L)$. Let $T$ be the normalization of $R$ in $L$. Note that $L/\QF(R)$ is a Galois extension with Galois group $G/H$. By Lemma \ref{lemma.different} $d_{T/R}=2|G/H|-2$. So using Lemma \ref{Hilbert-transitivity} one gets:
  $$2|G|-2+\sum_{i=2}^{\infty} (|G_i|-1)=|H|(2|G/H|-2)+2|H|-2+\sum_{i=2}^{\infty}(|H_i|-1)$$
  Rearranging and using $|G|=|G/H|\cdot|H|$, the above reduces to the following
  $$2|G/H|-2+|H|^{-1}\sum_{i=2}^{\infty} (|G_i|-|H_i|)=2|G/H|-2$$
  So $G_i=H_i$ for $i\ge 2$. Hence $G_2=H\cap G_2$ which implies $G_2\subset H$.
\end{proof}

\begin{cor}
 Let $S/R$ be a finite extension of complete DVRs over a perfect field $k$ of characteristic $p$ such that $\Gal(\QF(S)/\QF(R))=G=F^1G$. Then $F^2G\ne G$ iff there exists $\alpha\in \QF(S)$ such that $\alpha^p-\alpha\in \QF(R)$ and $v_R(\alpha^p-\alpha)=-1$.
\end{cor}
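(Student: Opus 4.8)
The plan is to read this off directly from Proposition \ref{characterize.G_2}, since the hypothesis $G=F^1G$ is exactly the hypothesis $G=G_1$ of that proposition (here $F^iG$ denotes the lower-numbering ramification group $G_i$). First I would invoke Proposition \ref{characterize.G_2} to identify $G_2=\Gal(\QF(S)/L)$, where $L$ is the subfield of $\QF(S)$ generated over $\QF(R)$ by all $\alpha\in\QF(S)$ with $\alpha^p-\alpha\in\QF(R)$ and $v_R(\alpha^p-\alpha)=-1$. By the fundamental theorem of Galois theory, $G_2=G$ if and only if the fixed field $L$ equals $\QF(R)$; equivalently, $G_2\ne G$ if and only if $L\supsetneq\QF(R)$. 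Thus the whole statement reduces to showing that $L\ne\QF(R)$ precisely when a single Artin--Schreier element $\alpha$ of the required type exists.

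The forward direction (existence of $\alpha$ implies $F^2G\ne G$) then requires only the observation that any such $\alpha$ already generates a nontrivial extension of $\QF(R)$, so that $L\supseteq\QF(R)(\alpha)\supsetneq\QF(R)$. I would verify $\alpha\notin\QF(R)$ by the same valuation computation used elsewhere in the section: if $\alpha\in\QF(R)$ with $v_R(\alpha)\ge 0$ then $\alpha\in R$, forcing $v_R(\alpha^p-\alpha)\ge 0$; while if $v_R(\alpha)<0$ then $v_R(\alpha^p)=pv_R(\alpha)<v_R(\alpha)$, so that $v_R(\alpha^p-\alpha)=pv_R(\alpha)$ is divisible by $p$. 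In either case $v_R(\alpha^p-\alpha)$ cannot equal $-1$, so no admissible $\alpha$ lies in $\QF(R)$, and the mere existence of one such $\alpha$ makes $L$ strictly larger than $\QF(R)$.

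For the reverse direction (if $F^2G\ne G$ then $\alpha$ exists) I would argue contrapositively: if no admissible $\alpha$ exists, then $L$ is generated over $\QF(R)$ by the empty set, hence $L=\QF(R)$ and $G_2=\Gal(\QF(S)/\QF(R))=G$. Combining the two implications yields the equivalence. I do not expect any genuine obstacle here, as the substantive content is entirely packaged in Proposition \ref{characterize.G_2}; the only fresh ingredient is the elementary remark that the defining condition $v_R(\alpha^p-\alpha)=-1$ itself excludes $\alpha\in\QF(R)$. The single point to state carefully is that writing $v_R(\alpha^p-\alpha)=-1$ presupposes $\alpha^p-\alpha\in\QF(R)$, which is exactly the hypothesis recorded in the corollary.
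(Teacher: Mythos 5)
Your proposal is correct and matches the paper's intent: the paper states this corollary without proof, treating it as an immediate consequence of Proposition \ref{characterize.G_2}, and your argument (Galois correspondence plus the valuation check that no admissible $\alpha$ can lie in $\QF(R)$, so that $L=\QF(R)$ exactly when no such $\alpha$ exists) is precisely the routine verification being left to the reader. The one elementary point you rightly make explicit --- that $v_R(\alpha^p-\alpha)=-1$ forces $\alpha\notin\QF(R)$ because otherwise the valuation would be either nonnegative or a multiple of $p$ --- is the same computation the paper uses inside the proof of Proposition \ref{filtration.Galois}.
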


\section{Reducing Inertia}

For a local ring $R$, let $m_R$ denote the maximal ideal of $R$. In this section we shall show how even the wild part of inertia subgroup of a Galois cover can be reduced. We begin with the following lemma.
\begin{lemma}\label{completelocal}
 Let $R$ be a DVR and $K$ be the quotient field of $R$. Let $L$ and $M$ be finite separable extensions of $K$ and $\Omega=LM$ their compositum. Let $A$ be a DVR dominating $R$ with quotient field $\Omega$. Note that $S=A\cap L$ and $T=A\cap M$ are DVRs. Let $\hat K$, $\hat L$, $\hat M$ and $\hat \Omega$ be the quotient field of the complete DVRs $\hat R$, $\hat S$, $\hat T$ and $\hat A$ respectively. If $A/m_A=S/m_S$ then $\hat \Omega = \hat L \hat M$.  Here all fields are viewed as subfields of an algebraic closure of $\hat K$.
\end{lemma}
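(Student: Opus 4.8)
The plan is to establish the two inclusions $\hat L\hat M\subseteq\hat\Omega$ and $\hat\Omega\subseteq\hat L\hat M$ inside a fixed algebraic closure of $\hat K$, the first being formal and the second resting on a density argument rather than on any comparison of degrees or differents. First I would fix compatible embeddings. Since $A$ dominates $S=A\cap L$ and $T=A\cap M$, the local inclusions $S\hookrightarrow A$ and $T\hookrightarrow A$ of DVRs induce injections $\hat S\hookrightarrow\hat A$ and $\hat T\hookrightarrow\hat A$ on completions (a nonzero local homomorphism of DVRs has trivial kernel, the uniformizer not being killed). Taking fraction fields realizes $\hat L$ and $\hat M$ as subfields of $\hat\Omega$, compatibly with the inclusions $L\subseteq\hat L$ and $M\subseteq\hat M$. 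With these choices $\hat L\hat M\subseteq\hat\Omega$ is immediate, and I take this compatible system of embeddings into $\overline{\hat K}$ throughout.

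The crux is that $\hat L\hat M$ is \emph{closed} in $\hat\Omega$. Both $\hat L$ and $\hat M$ are finite over $\hat K$ (their degrees are the local degrees $e(S/R)f(S/R)$ and $e(T/R)f(T/R)$), so $\hat L\hat M$ is finite over $\hat K$, hence complete, since every finite extension of the complete discretely valued field $\hat K$ is complete; a complete subfield of the valued field $\hat\Omega$ is closed. On the other hand $\Omega=\QF(A)$ is dense in $\hat\Omega=\QF(\hat A)$: writing an element as $u\pi^{n}$ with $\pi\in A$ a uniformizer and $u\in\hat A^{\times}$, one approximates $u$ by an element of $A$.

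It therefore suffices to prove $\Omega\subseteq\hat L\hat M$, for then $\hat\Omega=\overline{\Omega}\subseteq\hat L\hat M\subseteq\hat\Omega$ forces equality. This is where $\Omega=LM$ enters: choosing a primitive element $M=K(\theta)$ (using separability of $M/K$) yields $\Omega=LM=L(\theta)$, and since $L\subseteq\hat L$ and $\theta\in M\subseteq\hat M$, the field $L(\theta)$ generated by $L$ and $\theta$ already lies in $\hat L\hat M$.

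The step needing the most care is the bookkeeping of the embeddings—making sure the copies of $\hat L,\hat M$ forming the compositum are exactly those sitting in $\hat\Omega$ via $\hat S,\hat T\subseteq\hat A$, consistently with viewing $L,M,\Omega$ inside a single $\overline{\hat K}$—after which the argument is purely topological. I would remark that the hypothesis $A/m_A=S/m_S$ does not appear to be needed for this density argument: the residue field of $\hat L\hat M$ can legitimately exceed the compositum of the residue fields of $\hat L$ and $\hat M$, so the conclusion survives even when $f(A/S)>1$. The hypothesis would instead be the natural input for an alternative proof that matches the local degrees $[\hat\Omega:\hat K]=e(A/R)f(A/R)$ and $[\hat L\hat M:\hat K]$, where $f(A/S)=1$ is what pins down the residue degree.
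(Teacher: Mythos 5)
Your proof is correct, and it takes a genuinely different route from the paper's. The paper notes $\pi_A\in LM\subseteq \hat L\hat M$ and reduces to showing $\hat A=\hat S[\pi_A]$: the ring $\hat S[\pi_A]$ is a complete DVR inside $\hat A$ whose maximal ideal is generated by a uniformizer of $\hat A$ and---this is exactly where the hypothesis $A/m_A=S/m_S$ is consumed---whose residue field equals that of $\hat A$, so Cohen's structure theory forces $\hat S[\pi_A]=\hat A$, i.e.\ $\hat \Omega=\hat L[\pi_A]\subseteq \hat L\hat M$. You instead run the density/completeness argument familiar from the number-field identity $L_w=LK_v$: $\hat L\hat M$ is finite over $\hat K$, hence complete, hence closed in $\hat\Omega$, while it contains $\Omega=LM$, which is dense in $\hat\Omega$. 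Both arguments are sound, and the comparison is instructive: the paper's proof exhibits $\hat\Omega$ as generated over $\hat L$ by the single element $\pi_A$ and makes visible what the residue-field hypothesis is for, whereas yours proves the stronger, hypothesis-free statement. Your closing remark is right: $A/m_A=S/m_S$ is not needed, since the residue field of $\hat L\hat M$ can strictly contain the compositum of the residue fields of $\hat L$ and $\hat M$ (e.g.\ two ramified quadratic extensions of $\QQ_p$ whose compositum contains the unramified quadratic); consequently the lemma and the corollary after it hold for any DVR $A$ dominating $R$, though nothing downstream changes because in the paper's applications (Theorems \ref{thm-inertia} and \ref{killing-inertia}) the hypothesis holds anyway. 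The one step you should justify rather than assert is the finiteness of $[\hat L:\hat K]$ (equivalently, of $\hat S$ over $\hat R$): since $A$ is normal it contains the integral closure $R_L$ of $R$ in $L$, so $S=A\cap L$ is a localization of $R_L$ at a maximal ideal, and $R_L$ is a finite $R$-module because $L/K$ is separable; this is precisely the point that can fail for inseparable extensions (defect), so it is where the lemma's separability hypothesis does real work in your argument.
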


\begin{proof}
 Note that $\hat L$ and $\hat M$ are contained in $\hat \Omega$. So $\hat L\hat M \subset \hat \Omega$. Let $\pi_A$ denote a uniformizing parameter of $A$. Then $\pi_A \in LM \subset \hat L \hat M$. So it is enough to show that $\hat \Omega =\hat L[\pi_A]$. Note that $\hat S[\pi_A]$ is a finite $\hat S$-module, hence it is a complete DVR \cite{cohen}. Also $\hat S \subset \hat S[\pi_A] \subset \hat A$ and $\pi_A$ generate the maximal ideal of $\hat A$, hence $\pi_AS$ is the maximal ideal of $\hat S[\pi_A]$. Moreover, the residue field of $\hat S$ is equal to $S/m_S=A/m_A$ which is same as the residue field of $\hat A$. Hence the residue field of $\hat S[\pi_A]$ is also same as the residue field of $\hat A$. So $\hat S[\pi_A]=\hat A$ (by \cite[Lemma 4]{cohen}). Hence the quotient field of $\hat S[\pi_A]$ is $\hat \Omega$. But that means $\hat L[\pi_A]=\hat \Omega$.
\end{proof}

\begin{cor}
 Let the notation be as in the above theorem. If $\hat L\subset \hat M$ then $A/T$ is an unramified extension. 
\end{cor}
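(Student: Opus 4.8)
The strategy is to show that passing to completions collapses the extension $A/T$ entirely, i.e.\ that $\hat A=\hat T$, and then to transport this back to the rings $A$ and $T$ using the fact that both the ramification index and the residue degree are insensitive to completion. Concretely, once $\hat A=\hat T$ is established, we get $e_{A/T}=1$ and a trivial residue extension, which is exactly unramifiedness.

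First I would apply Lemma \ref{completelocal}. Since the notation and standing hypotheses are inherited from that lemma, in particular $A/m_A=S/m_S$ holds, and the lemma yields $\hat\Omega=\hat L\hat M$. The additional hypothesis $\hat L\subset\hat M$ then gives $\hat L\hat M=\hat M$, so that $\hat\Omega=\hat M$. Thus the completion of the top field $\Omega$ agrees with the completion of the intermediate field $M$.

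Next I would identify the valuation rings inside the fixed algebraic closure of $\hat K$ (as in the lemma). Because $T=A\cap M$ and $A$ dominates $T$, the valuation $v_{\hat A}$ of $\hat A$ restricts on $\hat M$ to the valuation of $\hat T$, so that $\hat T=\hat A\cap\hat M$. But $\hat A\subset\hat\Omega=\hat M$, whence $\hat A\cap\hat M=\hat A$, and therefore $\hat A=\hat T$. Equivalently, $\hat T\subseteq\hat A$ are two complete discrete valuation rings with the common fraction field $\hat M=\hat\Omega$, one dominating the other, and such rings must coincide. Translating back, if $\pi_T$ is a uniformizer of $T$ then $e_{A/T}=v_{\hat A}(\pi_T)=e_{\hat A/\hat T}=1$, while the residue fields are preserved under completion, giving $A/m_A=\hat A/m_{\hat A}=\hat T/m_{\hat T}=T/m_T$; hence $f_{A/T}=1$ with trivial (in particular separable) residue extension. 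So $A/T$ is unramified.

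The main obstacle I anticipate is the middle step, namely the careful justification that $\hat T=\hat A\cap\hat M$ and hence $\hat A=\hat T$. This rests on the uniqueness of the extension of a valuation to a finite extension in the complete setting, which guarantees that $v_{\hat A}$ induces precisely the valuation of $\hat T$ on $\hat M$. Once this compatibility of valuations under completion is secured, the equality of fraction fields $\hat\Omega=\hat M$ forces the valuation rings themselves to agree, and the remaining passage to $e_{A/T}$ and $f_{A/T}$ is routine bookkeeping.
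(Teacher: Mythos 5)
Your proposal is correct and follows essentially the same route as the paper: apply Lemma \ref{completelocal}, use $\hat L\subset \hat M$ to get $\hat\Omega=\hat M$, conclude $\hat A=\hat T$, and hence that $A/T$ is unramified. The only (harmless) difference is in the final step, where you deduce $\hat A=\hat T$ from uniqueness of the valuation extension and domination of valuation rings, whereas the paper gets it from $\hat A$ being a finite $\hat T$-module (integrality over the integrally closed $\hat T$ inside $\hat\Omega=\hat M$).
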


\begin{proof}
 Since $\Omega/M$ is finite extension, so is $\hat \Omega/\hat M$. Hence $\hat A$ is a finite $\hat T$-module. By the above lemma and the hypothesis $\hat \Omega=\hat M$. So $\hat A=\hat T$, i.e. $A/T$ is unramified.
\end{proof}

Let $k$ be any field.
\begin{thm}\label{thm-inertia}
 Let $X\to Y$ and $Z\to Y$ be Galois covers of regular $k$-curves branched at $\tau \in Y$. Let $\tau_x$ and $\tau_z$ be closed points of $X$ and $Z$ respectively, lying above $\tau$. Suppose $k(\tau_z)=k(\tau)$. Let $W$ be an irreducible dominating component of the normalization of $X\times_Y Z$ containing the closed point $(\tau_x,\tau_z)$. Then $W\to Y$ is a Galois cover ramified at $\tau$ and the decomposition subgroup of the cover at $\tau$ is the Galois group of the field extension $QF(\hat{\cO}_{X,\tau_x})QF(\hat{\cO}_{Z,\tau_z})/QF(\hat{\cO}_{Y,\tau_y})$.
\end{thm}
\begin{proof}
 Let $R=\cO_{Y,\tau}$. Note that $R$ is a DVR. Let  $K$ be the quotient field of $R$. Let $L$ and $M$ be the function field of $X$ and $Z$ respectively and $\Omega=LM$ be their compositum. By definition $W$ is an irreducible regular curve with function field $\Omega$ and the two projections give the covering morphisms to $X$ and $Y$. Let $\tau_w$ denote the closed point $(\tau_x,\tau_z) \in W$ and $A=\cO_{W,\tau_w}$. Since $\tau_w$ lies above $\tau_x$ under the covering $W\to X$ and above $\tau_z$ under the covering $W\to Z$, we have that $A\cap L=\cO_{X,\tau_x}(=S\text{ say})$ and $A\cap M=\cO_{Z,\tau_z}(=T\text{ say})$. Since $k(\tau_z)=k(\tau)$ and $k(W)=k(X)k(Z)$ we get that $k(\tau_w)=k(\tau_z)k(\tau_x)=k(\tau_x)$. But this is same as $A/m_A=S/m_S$. So using the above lemma, we conclude that $\hat L\hat M=\hat \Omega$.

 The decomposition group of the cover $W\to Y$ at $\tau_w$ is given by the Galois group of the field extension $\hat \Omega/\hat K$ (\cite[Corollary 4, Section 8.6, Chapter 6]{bourbaki}). This completes the proof because $\hat \Omega= \hat L\hat M=QF(\hat{\cO}_{X,\tau_x})QF(\hat{\cO}_{Z,\tau_z})$ and $\hat K=QF(\hat{\cO}_{Y,\tau})$.
\end{proof}
 
\begin{pro}\label{killing.wild}
 Let $\Phi:X\to Y$ be a $G$-cover of regular $k$-curves ramified at $\tau_x\in X$ and let $\tau=\Phi(\tau_x)$. Let $G_{\tau}$ and $I_{\tau}$ be the decomposition subgroup and the inertia subgroup respectively at $\tau_x$. Let $N\le I_{\tau}$ be a normal subgroup of $G_{\tau}$. Suppose there exist a Galois cover $\Psi:Z\to Y$ of regular $k$-curves ramified at $\tau_z\in Z$ with $\Psi(\tau_z)=\tau$ such that $k(\tau_z)=k(\tau)$ and the fixed field $\QF(\hat \cO_{X,\tau_x})^N$ is same as the compositum $\QF(\hat \cO_{Z,\tau_z})k(\tau_x)$. Let $W$ be an irreducible dominating component of the normalization of $X\times_Y Z$ containing $(\tau_x,\tau_z)$. Then the natural morphism $W \to Z$ is a Galois cover. The inertia group and the decomposition group at the point $(\tau_x,\tau_z)$ are $N$ and an extension of $N$ by $\Gal(k(\tau_x)/k(\tau))$ respectively. 
\end{pro}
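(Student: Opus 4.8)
The plan is to pass to completions and reduce the whole statement to Galois theory of a single tower of complete local fields. Write $\hat K=\QF(\hat\cO_{Y,\tau})$, $\hat L=\QF(\hat\cO_{X,\tau_x})$, $\hat M=\QF(\hat\cO_{Z,\tau_z})$ and $\hat\Omega=\QF(\hat\cO_{W,(\tau_x,\tau_z)})$, all inside a fixed algebraic closure of $\hat K$, and let $L=k(X)$, $M=k(Z)$, $K=k(Y)$, $\Omega=k(W)=LM$. Since $k(\tau_z)=k(\tau)$, Theorem \ref{thm-inertia} applies to the pair $X\to Y$, $Z\to Y$ and yields $\hat\Omega=\hat L\hat M$; recall also the standard facts $\Gal(\hat L/\hat K)=G_\tau$ and $N=\Gal(\hat L/\hat L^N)\le G_\tau$. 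The observation that drives the proof is that the hypothesis $\hat L^N=\hat M\,k(\tau_x)$ forces $\hat M\subseteq\hat L^N\subseteq\hat L$; consequently $\hat\Omega=\hat L\hat M=\hat L$, so locally at $(\tau_x,\tau_z)$ the cover $W\to Y$ is nothing but $X\to Y$.

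Granting this, I would first note that $W\to Z$ is Galois: on function fields it is $\Omega/M=LM/M$, which is Galois because $L/K$ is. By the local--global identification of the decomposition group used in Theorem \ref{thm-inertia}, the decomposition group $D$ of $W\to Z$ at $(\tau_x,\tau_z)$ is $\Gal(\hat\Omega/\hat M)=\Gal(\hat L/\hat M)$. To analyse it I would study the tower $\hat M\subseteq\hat L^N\subseteq\hat L$. Because $k(\tau_z)=k(\tau)$, the residue field of $\hat M$ is $k(\tau)$, whereas $\hat L^N=\hat M\,k(\tau_x)$ has residue field $k(\tau_x)$; thus $\hat L^N/\hat M$ is obtained from the complete field $\hat M$ by adjoining the separable (in fact Galois) residue extension $k(\tau_x)/k(\tau)$, so it is unramified and Galois with $\Gal(\hat L^N/\hat M)\cong\Gal(k(\tau_x)/k(\tau))$. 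Hence $N=\Gal(\hat L/\hat L^N)$ is normal in $D$ with $D/N\cong\Gal(\hat L^N/\hat M)\cong\Gal(k(\tau_x)/k(\tau))$, which exhibits $D$ as an extension of $N$ by $\Gal(k(\tau_x)/k(\tau))$, as required.

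It remains to identify the inertia group $I$ of $W\to Z$ at $(\tau_x,\tau_z)$ with $N$. Every $\sigma\in N$ fixes $\hat L^N\supseteq k(\tau_x)$ pointwise and therefore acts trivially on the residue field $k(\tau_x)$, giving $N\subseteq I$. For the reverse inclusion I would exploit the tightness of the indices: the residue extension of $\hat L/\hat M$ is the separable extension $k(\tau_x)/k(\tau)$, so $[D:I]=[k(\tau_x):k(\tau)]$, while $[D:N]=[\hat L^N:\hat M]=[k(\tau_x):k(\tau)]$ since $\hat L^N/\hat M$ is unramified; thus $|I|=|N|$ and $I=N$.

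The step I expect to carry the weight is the collapse $\hat\Omega=\hat L$ extracted from the hypothesis, together with the accompanying observation that $k(\tau_z)=k(\tau)$ makes $\hat L^N/\hat M$ \emph{unramified}. This is exactly what pins the quotient $D/N$ to the residue Galois group and, via the index count, prevents the inertia from being strictly larger than $N$. Everything else is bookkeeping with the standard dictionary between decomposition and inertia groups and Galois groups of completed local fields.
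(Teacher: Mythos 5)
Your proposal is correct and follows essentially the same route as the paper: both extract $\QF(\hat\cO_{Z,\tau_z})\subseteq\QF(\hat\cO_{X,\tau_x})$ from the hypothesis so that the completed local extension at $(\tau_x,\tau_z)$ collapses to $\QF(\hat\cO_{X,\tau_x})/\QF(\hat\cO_{Z,\tau_z})$, and both analyze the tower $\QF(\hat\cO_{Z,\tau_z})\subset\QF(\hat\cO_{Z,\tau_z})k(\tau_x)=\QF(\hat\cO_{X,\tau_x})^N\subset\QF(\hat\cO_{X,\tau_x})$ with the middle step unramified of degree $[k(\tau_x):k(\tau)]$. The only cosmetic differences are that the paper obtains $\Gal(\QF(\hat\cO_{Z,\tau_z})k(\tau_x)/\QF(\hat\cO_{Z,\tau_z}))\cong\Gal(k(\tau_x)/k(\tau))$ via linear disjointness of the totally ramified and unramified subextensions over $\QF(\hat\cO_{Y,\tau})$, where you invoke the theory of unramified extensions directly, and the paper identifies the inertia group at once as the Galois group over the maximal unramified subextension, where you argue by the inclusion $N\subseteq I$ plus an index count.
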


\begin{proof}
 Let $\tau_w\in W$ be the point $(\tau_x,\tau_z)$. Applying Theorem \ref{thm-inertia}, we obtain that the decomposition group of the Galois cover $W\to Y$ at $\tau_w$ is isomorphic to $G_{\tau_w}=\Gal(\QF(\hat{\cO}_{X,\tau_x})\QF(\hat{\cO}_{Z,\tau_z})/\QF(\hat{\cO}_{Y,\tau}))$. Since $\QF(\hat \cO_{Z,\tau_z})\subset \QF(\hat \cO_{X,\tau_x})$, we have $G_{\tau_w}=G_{\tau}=\Gal(\QF(\hat{\cO}_{X,\tau_x})/\QF(\hat{\cO}_{Y,\tau}))$. Since $k(\tau_z)=k(\tau)$, the inertia group and the decomposition group of the cover $Z\to Y$ at $\tau_z$ are both $\Gal(\QF(\hat \cO_{Z,\tau_z})/\QF(\hat \cO_{Y,\tau}))$. Since $\QF(\hat \cO_{X,\tau_x})^N=\QF(\hat \cO_{Z,\tau_z})k(\tau_x)$ we also obtain that $\Gal(\QF(\hat \cO_{Z,\tau_z})k(\tau_x)/\QF(\hat \cO_{Y,\tau}))=G_{\tau}/N$. Moreover, we have $G_{\tau}/I_{\tau}=\Gal(k(\tau_x)/k(\tau))=\Gal(k(\tau_x)\QF(\hat\cO_{Y,\tau})/\QF(\hat \cO_{Y,\tau}))$. Since $\hat \cO_{Z,\tau_z}/\hat \cO_{Y,\tau}$ is totally ramified, $\QF(\hat \cO_{Z,\tau_z})$, $k(\tau_x)\QF(\hat \cO_{Y,\tau})$ are linearly disjoint over $\QF(\hat \cO_{Y,\tau})$. 
 \[
  \xymatrix{
     & \QF(\hat\cO_{X,\tau_x})\\
     & \QF(\hat\cO_{Z,\tau_z})k(\tau_x)\ar[u]^{N}\\
   \QF(\hat\cO_{Z,\tau_z})\ar[ru]^{G_{\tau}/I_{\tau}} & &  \QF(\hat\cO_{Y,\tau})k(\tau_x)\ar[lu]\\
     & \QF(\hat\cO_{Y,\tau})\ar[uu]^{G_{\tau}/N}\ar[ru]_{G_{\tau}/I_{\tau}}\ar[lu]
   }
 \]

 So $\Gal(\QF(\hat \cO_{Z,\tau_z})k(\tau_x)/\QF(\hat \cO_{Z,\tau_z}))=\Gal(k(\tau_x)/k(\tau))$. So the decomposition group of $W \to Z$ is $\Gal(\QF(\hat \cO_{X,\tau})/\QF(\hat \cO_{Z,\tau_z}))$ which is an extension of $N$ by $\Gal(k(\tau_x)/k(\tau))$ and the inertia group is $\Gal(\QF(\hat \cO_{X,\tau})/\QF(\hat \cO_{Z,\tau_z})k(\tau_x))=N$.
\end{proof}

Let $k$ be an algebraically closed field of characteristic $p>0$.
\begin{thm}\label{killing-inertia}
 Let $\Phi:X\to Y$ be a $G$-Galois cover of regular $k$-curves. Let $\tau_x\in X$ be a ramification point and $\tau=\Phi(\tau_x)$. Let $I$ be the inertia group of $\Phi$ at $\tau_x$.  There exists a cover $\Psi:Z\to Y$ of deg $|I|$, such that the cover $W\to Z$ is \'etale over $\tau_z$ where $W$ is the normalization of $X\times_Y Z$ and $\tau_z\in Z$ is such that $\Psi(\tau_z)=\tau$. Moreover if there are no non-trivial homomorphism from $G \to P$ where $P$ is a $p$-sylow subgroup of $I$ then $W \to Z$ is a $G$-cover of irreducible regular $k$-curves.
\end{thm}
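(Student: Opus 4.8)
The plan is to deduce the whole statement from Proposition \ref{killing.wild} applied with the trivial normal subgroup $N=\{e\}$, the only substantial work being the construction of a suitable $Z$. Since $k$ is algebraically closed the residue extensions are trivial, so at $\tau_x$ the decomposition and inertia groups coincide and $L:=\QF(\hat\cO_{X,\tau_x})$ is a totally ramified Galois extension of $K:=\QF(\hat\cO_{Y,\tau})$ of degree $|I|$ with group $I$. First I would produce a connected Galois cover $\Psi\colon Z\to Y$ of group $I$ (hence of degree $|I|$), totally ramified over $\tau$, together with a point $\tau_z$ over $\tau$ for which $\QF(\hat\cO_{Z,\tau_z})\cong_K L$; equivalently, a global degree $|I|$ extension of $k(Y)$ whose completion at a place over $\tau$ reproduces the given local extension $L/K$. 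Realizing a prescribed, possibly wildly ramified, local extension as the completion of such a global cover is where a realization input is required, and I expect this to be the main obstacle: over $\PP^1$ one may invoke the Katz--Gabber canonical cover (with the prescribed behaviour at $\tau$ and tame ramification at one auxiliary point), while over a general $Y$ the analogous existence follows from formal patching in the style of Harbater. The freedom to allow an auxiliary tame branch point will be crucial for the last assertion.

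Granting $Z$, the \'etaleness over $\tau_z$ is formal. Since $k(\tau_x)=k(\tau)$ we have $\QF(\hat\cO_{X,\tau_x})^{\{e\}}=L=\QF(\hat\cO_{Z,\tau_z})=\QF(\hat\cO_{Z,\tau_z})k(\tau_x)$, so Proposition \ref{killing.wild} with $N=\{e\}$ applies and shows that $W\to Z$ has trivial inertia at $\tau_w=(\tau_x,\tau_z)$. To upgrade this to \'etaleness over the entire fibre I would use that $L/K$ is Galois: every point of $W$ over $\tau_z$ is of the form $(g\tau_x,\tau_z)$, and the completion $\QF(\hat\cO_{X,g\tau_x})$ is a $K$-conjugate of the Galois extension $L$, hence equals $L$; so by Theorem \ref{thm-inertia} (via Lemma \ref{completelocal}) the local extension of $W\to Z$ at that point is $L\cdot L/L$, which is trivial. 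Thus $W\to Z$ is \'etale over $\tau_z$ and $\deg\Psi=|I|$, which gives the first assertion.

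For the ``moreover'' part I would show that $W$ is irreducible and $W\to Z$ is a $G$-cover precisely by checking that $X\times_Y Z$ is connected, i.e. that $k(X)$ and $k(Z)$ are linearly disjoint over $k(Y)$: connectedness makes the base change $X\times_Y Z\to Z$ a connected $G$-Galois cover, whose normalization is the desired $G$-cover. Put $F=k(X)\cap k(Z)$; as a subfield of the Galois extension $k(X)/k(Y)$ it is Galois over $k(Y)$, and $Q:=\Gal(F/k(Y))$ is a common quotient of $G$ and of $I=P\rtimes\mu_n$. Writing $Q=\bar P\rtimes\bar C$ with $\bar P$ a $p$-group (a quotient of $P$) and $\bar C$ cyclic of order prime to $p$, the strategy is to kill $\bar C$ and $\bar P$ separately.

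To kill $\bar C$ I would arrange the auxiliary tame branching of $Z$ to lie at a point $\tau'$ outside the branch locus of $X$, chosen so that every nontrivial tame subcover of $Z$ ramifies at $\tau'$; since $F\subseteq k(X)$ is unramified at $\tau'$, this forces $\bar C=\{e\}$, so $Q$ is a $p$-group. Finally, a nontrivial $p$-group quotient $Q$ of $G$ surjects onto $\ZZ/p\ZZ$, and (as $Q\neq\{e\}$ forces $P\neq\{e\}$) the composite $G\onto Q\onto \ZZ/p\ZZ\into P$ is a nontrivial homomorphism $G\to P$, contradicting the hypothesis. Hence $Q=\{e\}$, so $X\times_Y Z$ is irreducible and $W\to Z$ is a $G$-cover of regular $k$-curves. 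Besides the existence of $Z$, I expect the delicate point here to be the tame bookkeeping in the third step, namely guaranteeing that the prescribed tame part $\mu_n$ at $\tau$ can be globalized so as to ramify at $\tau'$ and thereby eliminate every prime-to-$p$ common quotient.
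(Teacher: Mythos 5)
Your proposal is correct in its overall architecture and shares the paper's backbone --- construct a cover $Z\to Y$ whose completed local extension at a point over $\tau$ reproduces $\QF(\hat\cO_{X,\tau_x})$, apply Proposition \ref{killing.wild} with $N=\{e\}$, spread \'etaleness over the whole fibre by Galois conjugacy, and settle irreducibility by analyzing the common quotient $\Gal\bigl((k(X)\cap k(Z))/k(Y)\bigr)$ --- but it globalizes the local data by a genuinely different route. The paper works in two stages: a Kummer cover first kills the tame part $\mu_n$ by Abhyankar's lemma (with the coordinate $y$ chosen so that $k(Y)[y^{1/n}]\cap k(X)=k(Y)$), and only the remaining $p$-group $P$ is realized globally, via \cite[Cor.~2.4]{Harbater-moduliofpcovers} applied over $\PP^1$ and pulled back along a finite map $Y_1\to\PP^1$ that is \'etale at the relevant point, Lemma \ref{completelocal} identifying the completions. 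You instead realize the whole mixed extension with group $I=P\rtimes\mu_n$ in one step (Katz--Gabber over $\PP^1$, patching over general $Y$), which forces an auxiliary tame branch point $\tau'$ and hence your extra step of killing prime-to-$p$ common quotients. That is exactly the trade-off: the paper's ordering makes both of your flagged difficulties vanish, because Harbater's $P$-covers of $\PP^1$ are branched at a \emph{single} point, totally ramified there, so there is no auxiliary tame locus at all --- any common quotient of $G$ and $P$ is automatically a $p$-group and the no-homomorphism hypothesis disposes of it with no tame bookkeeping --- and the passage from $\PP^1$ to a general $Y$ is done by the elementary pullback trick rather than by an appeal to patching. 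Note that this same pullback trick would discharge both of your hedged steps: compose the Katz--Gabber cover on $\PP^1$ with a map $\alpha:Y\to\PP^1$ that is \'etale at $\tau$ with $\alpha(\tau)=0$ and \'etale at some $\tau'\in\alpha^{-1}(\infty)$ chosen outside the (finite) branch locus of $X$; since the prime-to-$p$ quotient of a Katz--Gabber cover is a Kummer cover, totally ramified at the auxiliary point, every nontrivial tame subcover of $Z$ then ramifies at $\tau'$, which is precisely what your third step needs. What your route buys is a $Z\to Y$ that is honestly $I$-Galois of degree $|I|$ and an explicit mechanism for tame disjointness, which the paper leaves implicit in its unproved (though standard) choice of $y$; what the paper buys is economy of inputs --- one realization theorem for $p$-groups plus Abhyankar's lemma --- and a shorter group-theoretic endgame.
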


\begin{proof}
 Since $I$ is the inertia group, it is isomorphic to $P\rtimes \mu_n$ where $(p,n)=1$ and $\mu_n$ is a cyclic group of order $n$. Let $y$ be a local coordinate of $Y$ at $\tau$ such that $k(Y)[y^{1/n}]\cap k(X)=k(Y)$. Let $Z_1$ be the normalization of $Y$ in $k(Y)[y^{1/n}]$. Then $Z_1\to Y$ is a $\mu_n$-cover branched at $\tau$ such that $k(Z_1)$ and $k(X)$ are linearly disjoint over $k(Y)$. Let $\tau_{z1}\in Z_1$ be a point lying above $\tau$. Let $X_1$ be the normalization of $X\times_Y Z_1$. Then by the above theorem $\Phi_1:X_1\to Z_1$ is a $G$-cover of irreducible regular $k$-curves and the inertia group at $(\tau_x,\tau_{z1})$ is $P$.

 Let $Y_1=Z_1$, $\tau_{x1}=(\tau_x,\tau_{z1})$ and $\tau_1=\tau_{z1}$. Then $\Phi_1:X_1\to Y_1$ is a $G$-cover with $\Phi_1(\tau_{x1})=\tau_1$ and the inertia group of this cover at $\tau_{x1}$ is $P$. Let $y_1$ be a regular parameter of $Y_1$ at $\tau_1$. Then $k(Y_1)/k(y_1)$ is a finite extension. Since $Y_1$ is a regular curve, we get a finite morphism $\alpha:Y_1\to \PP^1_{y_1}$ such that $\alpha(\tau_1)$ is the point $y_1=0$ and $\alpha$ is \'etale at $\tau_1$ (as $\hat \cO_{Y_1,\tau_1}=k[[y_1]]$).

 Note that $\QF(\hat \cO_{X,\tau_{x1}})/k((y_1))$ is a $P$-extension. By \cite[Cor 2.4]{Harbater-moduliofpcovers}, there exist a $P$-cover $V\to \PP^1_{y_1}$ branched only at $y_1=0$ (where it is totally ramified) such that $\QF(\hat{\cO}_{V,\theta})=\QF(\hat{\cO}_{X_1,\tau_{x1}})$ as extensions of $k((y_1))$. Here $\theta$ is the unique point in $V$ lying above $y_1=0$. Since $V\to \PP^1_{y_1}$ is totally ramified over $y_1=0$ and $Y_1\to \PP^1_{y_1}$ is \'etale over $y_1=0$, the two covers are linearly disjoint. Let $Z$ be the normalization of $V\times_{\PP^1_{y_1}} Y_1$. Then the projection map $Z\to Y_1$ is a $P$-cover. Let $\tau_z\in Z$ be the closed point $(\theta,\tau_1)$. By Lemma \ref{completelocal}, $\QF(\hat \cO_{Z,\tau_z})=\QF(\hat \cO_{V,\theta})\QF(\hat \cO_{Y_1,\tau_1})=\QF(\hat \cO_{X_1,\tau_{x1}})$. Applying Proposition \ref{killing.wild} with $N=\{e\}$, we get that an irreducible dominating component $W$ of the normalization of $X_1 \times_{Y_1} Z$ is a Galois cover of $Z$ such that the inertia group over $\tau_z$ is $\{e\}$. Hence the normalization of $X_1 \times_{Y_1} Z$ is a cover of $Z$ \'etale over $\tau_z$.

 Moreover, there are no nontrivial homomorphism from $G$ to $P$ implies that $k(Z)$ and $k(X_1)$ are linearly disjoint over $k(Y_1)$. Hence $W \to Z$ is a $G$-cover. We take $Z\to Y$ to be the composition $Z\to Y_1\to Y$. Note that the morphism $X\times_Y Z\to Z$ is same as $X_1 \times_{Y_1} Z \to Z$ and the degree of the morphism $Z\to Y$ is $|P|n=|I|$. 
\end{proof}

\begin{thm}\label{reducing-inertia}
 Let $\Phi:X\to \PP^1$ be a $G$-Galois cover of regular $k$-curves. Suppose $\Phi$ is branched only at one point $\infty\in \PP^1$ and the inertia group of $\Phi$ over $\infty$ is  $I$. Let $P$ be a subgroup of $I$ such that $I_1 \supset P \supset I_2$. Suppose there are no nontrivial homomorphism from $G$ to $P$. Then there exist a $G$-cover $W\to \PP^1$ ramified only at $\infty$ and the inertia group at $\infty$ is $P$.
\end{thm}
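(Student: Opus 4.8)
The plan is to prove the statement in two stages. First I would strip off the tame part of the inertia by an Abhyankar (Kummer) base change, reducing to a $G$-cover of $\PP^1$ whose inertia over the branch point is the wild $p$-group $I_1$; then I would carry out a wild reduction from $I_1$ down to $P$ via Proposition \ref{killing.wild}. The essential difficulty throughout is not the reduction itself but keeping the base of the reduced cover equal to $\PP^1$.

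Stage one. Write $\hat L=\QF(\hat\cO_{X,\tau_x})$ and $\hat K=\QF(\hat\cO_{\PP^1,\infty})=k((y))$, so $\Gal(\hat L/\hat K)=I=I_1\rtimes\mu_n$ (decomposition equals inertia since $k$ is algebraically closed). Choosing $y$ so that $k(X)$ and $k(\PP^1)[y^{1/n}]$ are linearly disjoint over $k(\PP^1)$ and forming the Kummer cover $Z_1\to\PP^1$, the normalization $X_1$ of $X\times_{\PP^1}Z_1$ is a $G$-cover $X_1\to Y_1:=Z_1$ with inertia $I_1$ at the point $\tau_1$ over $\infty$, exactly as at the start of the proof of Theorem \ref{killing-inertia}; since a Kummer cover of $\PP^1$ is rational, $Y_1\cong\PP^1$ and $X_1\to Y_1$ is still branched only at $\tau_1$. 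The local field of $Y_1$ at $\tau_1$ is the unique tame extension $\hat L^{I_1}\subseteq\hat L$, so the local extension of $X_1\to Y_1$ at $\tau_1$ is $\hat L/\hat L^{I_1}$ with group $I_1$; by Proposition \ref{filtration} its ramification filtration is induced from that of $I$, whence its first ramification group is $I_1$ and its second is $I_2$. Thus $I_1=(I_1)_1$ and Proposition \ref{characterize.G_2} applies to $\hat L/\hat L^{I_1}$, giving $\hat L^{I_2}=L_0$, where $L_0$ is generated over $\hat L^{I_1}$ by the $\alpha$ with $v(\alpha^p-\alpha)=-1$.

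Stage two. I would apply Proposition \ref{killing.wild} to $X_1\to Y_1$ with $N=P$. This is legitimate because $I_2\subseteq P\subseteq I_1$ and $I_1/I_2$ is abelian (Proposition \ref{higher.ramification.group}), so $P$ is normal in the decomposition group $I_1$. The cover $\Psi:Z\to Y_1$ demanded by Proposition \ref{killing.wild} must have local field $\hat L^P$ over $\tau_1$. Now $\hat L^{I_1}\subseteq\hat L^P\subseteq\hat L^{I_2}=L_0$, and every nontrivial $\ZZ/p$-subextension inside $L_0$ has ramification break exactly $1$ (break $\le1$ since $(I_1/I_2)_2=\{e\}$ by Proposition \ref{filtration}, and break $\ge1$ by total ramification); hence $\hat L^P/\hat L^{I_1}$ is generated by Artin-Schreier elements $\beta$ with $v(\beta^p-\beta)=-1$. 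This is precisely the hypothesis of Lemma \ref{lemma.different}, so the different of $\hat L^P/\hat L^{I_1}$ has degree $2|I_1/P|-2$. I would then realize this local $(I_1/P)$-extension globally: as in Theorem \ref{killing-inertia}, \cite[Cor 2.4]{Harbater-moduliofpcovers} produces an $(I_1/P)$-cover $Z\to Y_1\cong\PP^1$ branched only at $\tau_1$, totally ramified there, with local field $\hat L^P$. By Riemann-Hurwitz, $2g_Z-2=|I_1/P|(-2)+(2|I_1/P|-2)=-2$, so $Z\cong\PP^1$.

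Conclusion and main obstacle. With this $Z$ one has $\QF(\hat\cO_{Z,\tau_z})=\hat L^P=\hat L^P\cdot k(\tau_1)$, so Proposition \ref{killing.wild} yields a Galois cover $W\to Z$ whose inertia at the point over $\tau_1$ is exactly $P$, branched only there; for $W\to Z$ to be a $G$-cover one needs $k(X_1)$ and $k(Z)$ linearly disjoint over $k(Y_1)$, i.e.\ that $G$ have no nontrivial homomorphism onto a quotient of the $p$-group $I_1/P$, which is supplied by the no-homomorphism hypothesis. Renaming $Z\cong\PP^1$ and its branch point as $\infty$ finishes the argument. The step I expect to be the crux is the rationality of $Z$: reducing the inertia to $P$ is cheap over some base, but forcing the base to remain $\PP^1$ is equivalent to the base-change cover $Z\to\PP^1$ having genus $0$, and that is exactly the different computation $2|I_1/P|-2$ coming from Lemma \ref{lemma.different}. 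This is where the assumption $P\supseteq I_2$ is indispensable: through Proposition \ref{characterize.G_2} it guarantees that $\hat L^P/\hat L^{I_1}$ is generated by break-one elements; if there were no jump in the filtration at $2$ (that is, $I_1=I_2$) there would be no admissible $P$ and no such genus-zero reduction. The remaining verifications — normality of $P$, the induced filtration after the tame base change, and the linear disjointness — are routine by comparison.
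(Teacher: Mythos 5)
Your proposal reproduces the paper's own proof essentially step for step: the same Kummer/Abhyankar reduction to $I=I_1$, the same identification of $\hat L^{P}$ as a compositum of break-one Artin--Schreier extensions via Proposition \ref{characterize.G_2}, the same globalization by Harbater's result, the same genus count $2g_Z-2=-2|I_1/P|+(2|I_1/P|-2)$ via Lemma \ref{lemma.different} and Riemann--Hurwitz, and the same application of Proposition \ref{killing.wild} with $N=P$, using the no-homomorphism hypothesis for linear disjointness. (Your $|I_1/P|$ is also the correct count; the paper's proof writes $|P|$ there by a notational slip.)

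Two points of care, one of which is a real error as written. First, in stage one you choose $y$ to be an arbitrary local coordinate at $\infty$ making $k(\PP^1)[y^{1/n}]$ linearly disjoint from $k(X)$, and then assert that ``a Kummer cover of $\PP^1$ is rational.'' That is false in this generality: for $y=(x-1)(x-2)/x^3$, a legitimate local coordinate at $x=\infty$, adjoining $y^{1/2}$ gives a cover branched at four points, of genus one. Since keeping the base equal to $\PP^1$ is, as you stress yourself, the crux of the theorem, this step must be done as in the paper: take the specific Kummer cover $\PP^1_y\to\PP^1_x$ given by $x=y^n$, which is visibly rational, and obtain the linear disjointness not by choice of $y$ but from ramification --- $\Phi$ is \'etale over $x=0$ while this cover is totally ramified there, so the two can share no nontrivial common subcover (the same \'etale-versus-totally-ramified argument you invoke elsewhere). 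Second, your claim that every $\ZZ/p$-subextension of $\hat L^{I_2}$ has break exactly one does not follow from Proposition \ref{filtration} alone: that proposition computes quotient filtrations only for quotients by the subgroups $G_j$, whereas the kernel of $I_1/I_2\onto\ZZ/p$ is in general not of this form. One needs Herbrand's theorem (upper numbering), or the observation that Artin--Schreier classes admitting a representative of valuation $\ge -1$ form a subgroup of $\QF(R)$ modulo classes of elements of $\QF(R)$, or the different computation carried out in the proof of Proposition \ref{filtration.Galois}. This last gap is shared by the paper itself, which cites Proposition \ref{characterize.G_2} for the subfield $\QF(S)^{P}$ although that proposition concerns $\QF(S)^{I_2}$. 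With these repairs your argument coincides with the paper's.
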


\begin{proof}
 Let $n=[I:I_1]$ be the tame ramification index of $\Phi$ at $\infty$. Let $x$ be a local coordinate on $\PP^1$ and the point $\infty$ is $x=\infty$. Let $\PP^1_y\to \PP^1_x$ be the Kummer cover obtained by sending $y^n$ to $x$. Since $\Phi$ is \'etale at $x=0$ and the cover $\PP^1_y\to \PP^1_x$ is totally ramified at $x=0$ the two covers are linearly disjoint. So letting $W$ to be the normalization of $X\times_{\PP^1_x} \PP^1_y$, we obtain a $G$-cover $\Phi_1:W\to \PP^1_y$ of regular $k$-curves. Moreover by Abhyankar's lemma $\Phi_1$ is ramified only at $y=\infty$ and the inertia group of $\Phi_1$ at $y=\infty$ is same the subgroup $I_1$ of $I$. So replacing $\Phi$ by $\Phi_1$, we may assume $I=I_1$. Also since $I_1/I_2$ is abelian, $P$ is a normal subgroup of $I$.

 Let $\tau\in X$ be a point above $x=\infty$. Let $S=\hat\cO_{X,\tau}$ and $R=\hat\cO_{\PP^1,\infty}$ then $R=k[[x^{-1}]]$ and $\Gal(QF(S)/\QF(R))=I$. Let $L=\QF(S)^P$. Then by Proposition \ref{characterize.G_2}, $L=\QF(R)(\alpha_1,\ldots,\alpha_l)$ where $\alpha_i\in \QF(S)$ is such that $v_R(\alpha_i^p-\alpha_i)=-1$ for $1\le i \le l$. Let $T$ be the normalization of $R$ in $L$. Then $\spec(T)$ is a principal $P$-cover of $\spec(R)$. By \cite[Corollary 2.4]{Harbater-moduliofpcovers}, this extends to a $P$-cover $\Psi:Z\to \PP^1_x$ ramified only at $x=\infty$ where it is totally ramified. Let $\tau_z\in Z$ be the point lying above $x=\infty$ then $\QF(\hat\cO_{Z,\tau_z})=L=\QF(S)^P$. By Lemma \ref{lemma.different} $d_{T/R}=2|P|-2$. So by Riemann-Hurwitz formula, the genus of $Z$ is given by $$2g_Z-2=|P|(0-2)+d_{T/R}$$
 Hence $g_Z=0$. So $Z$ is isomorphic to $\PP^1$.

 Since there are no nontrivial homomorphism from $G$ to $P$, $\Phi$ and $\Psi$ are linearly disjoint covers of $\PP^1_x$. Let $W$ be the normalization of $X\times_{\PP^1_x} Z$. Now we are in the situation of Proposition \ref{killing.wild}. Hence the $G$-cover $W\to Z$ is ramified only at $\tau_z$ and the inertia group at $\tau_z$ is $P$. This completes the proof as $Z$ is isomorphic to $\PP^1$.
\end{proof}

\begin{rmk}
 Note that if $G$ is a simple group different from $\ZZ/p\ZZ$ then there are no nontrivial homomorphism from $G$ to $P$. Hence the above results apply in this scenario.
\end{rmk}

\begin{cor}
 Let $\Phi:X\to \PP^1$ be a $G$-Galois cover of regular $k$-curves branched only at one point $\infty\in \PP^1$ and the inertia group of $\Phi$ over $\infty$ is  $I$. Suppose there are no nontrivial homomorphism from $G$ to $I_2$. Then conjugates of $I_2$ generate $G$.
\end{cor}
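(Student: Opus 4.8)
The plan is to feed $P=I_2$ into Theorem~\ref{reducing-inertia} and then invoke the standard fact, recalled in the introduction, that for a Galois cover of $\PP^1$ branched only at $\infty$ the conjugates of the Sylow-$p$ subgroup of the inertia generate the whole group.

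First I would check that $P=I_2$ is an admissible choice in Theorem~\ref{reducing-inertia}. The ramification filtration is decreasing, so $I_1\supseteq I_2$, and trivially $I_2\supseteq I_2$; hence the required chain $I_1\supseteq P\supseteq I_2$ holds with $P=I_2$. The remaining hypothesis, that there be no nontrivial homomorphism $G\to P$, is precisely the standing assumption of the corollary once we set $P=I_2$. Theorem~\ref{reducing-inertia} therefore yields a $G$-cover $W\to\PP^1$, branched only at $\infty$, whose inertia group over $\infty$ is exactly $I_2$.

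The key observation is that $I_2$ is itself a $p$-group: it sits inside $I_1$, which is a $p$-group by Proposition~\ref{higher.ramification.group}, and a subgroup of a $p$-group is a $p$-group. In particular $I_2$ coincides with its own Sylow-$p$ subgroup, so passing to the Sylow-$p$ part of the inertia does nothing for the cover $W\to\PP^1$.

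Finally I would apply the general principle to $W\to\PP^1$. Since this is a $G$-cover of $\PP^1$ branched only at $\infty$, triviality of the tame fundamental group of $\Aff^1$ forces the conjugates in $G$ of the Sylow-$p$ subgroup of its inertia group $I_2$ to generate $G$. As $I_2$ is a $p$-group, this Sylow subgroup is $I_2$ itself, and we conclude that the conjugates of $I_2$ generate $G$. The only substantive input is Theorem~\ref{reducing-inertia}; the sole points requiring care are verifying that $P=I_2$ satisfies its hypotheses and recognizing that $I_2\subseteq I_1$ is automatically a $p$-group, which is exactly what makes the reduction to $I_2$ (rather than merely to the larger wild inertia $I_1$) possible and closes the argument.
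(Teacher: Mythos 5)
Your proposal is correct and follows essentially the same route as the paper: apply Theorem~\ref{reducing-inertia} with $P=I_2$ to get a $G$-cover of $\PP^1$ branched only at $\infty$ with inertia $I_2$, then invoke the triviality of the tame fundamental group of $\Aff^1$ (equivalently, that a nontrivial \'etale cover of $\Aff^1$ must be wildly ramified over $\infty$) to conclude that the conjugates of $I_2$ generate $G$. Your explicit verification of the hypotheses and the observation that $I_2\subseteq I_1$ is a $p$-group (so it equals its own Sylow-$p$ part) are details the paper leaves implicit, but the argument is the same.
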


\begin{proof}
 Applying the above theorem with $P=I_2$, we get an \'etale $G$-cover of $\Aff^1$ with the inertia group $I_2$ at $\infty$. Hence the conjugates of $I_2$ generate $G$ since a nontrivial \'etale cover of $\Aff^1$ must be wildly ramified over $\infty$.
\end{proof}

\end{document}